\declaretheorem[style = plain, numberwithin=section]{theorem}
\declaretheorem[style = plain,      sibling = theorem]{corollary}
\declaretheorem[style = plain,      sibling = theorem]{proposition}
\declaretheorem[style = definition, sibling = theorem]{definition}
\declaretheorem[style = definition, sibling = theorem, qed=$\spadesuit$]{example}
\declaretheorem[style = remark,    numbered = no]{remark}
\declaretheorem[style = definition, sibling = theorem]{question}
\definecolor{codegreen}{rgb}{0,0.6,0}
\definecolor{codegray}{rgb}{0.5,0.5,0.5}
\definecolor{backcolour}{rgb}{0.95,0.95,0.92}
\definecolor{codegreen}{rgb}{0,  0.6,  0.35}
\lstdefinestyle{mystyle}{
	backgroundcolor=\color{backcolour},   
	commentstyle=\color{codegreen},
	keywordstyle=\color{blue},
	numberstyle=\tiny\color{codegray},
	stringstyle=\color{codegreen},
	basicstyle=\ttfamily\footnotesize,
	breakatwhitespace=false,         
	breaklines=true,                 
	captionpos=b,                    
	keepspaces=true,                 
	numbers=left,                    
	numbersep=5pt,                  
	showspaces=false,                
	showstringspaces=false,
	showtabs=false,                  
	tabsize=2
}
\newcommand{\N}{\mathbb{N}}   
\newcommand{\Z}{\mathbb{Z}}   
\newcommand{\R}{\mathbb{R}}   
\newcommand{\C}{\mathbb{C}}   
\newcommand{\aut}{\mathrm{Aut}}
\renewcommand{\L}{\mathcal{L}}
\newcommand{\U}{\mathcal{U}}
\newcommand{\I}{\mathcal{I}}
\newcommand{\dotimes}{\overset{\cdot}{\otimes}}
\newcommand{\B}{\mathcal{B}}      
\title{On Fourier and Fourier-Stieltjes Algebras of C*-Dynamical Systems}
\author{Alexander G. Ravnanger}
\address{Department of Mathematical Sciences, 
University of Copenhagen, 
Universitetsparken 5,
2100 Copenhagen}
\email[]{agr@math.ku.dk}
\begin{document}
		\begin{abstract}
We continue the study of the Fourier-Stieltjes algebra of a C*-dynamical system, initiated by Bédos and Conti, and recently extended by Buss, Kwa{\'s}niewski, McKee and Skalski. Firstly, we introduce and study a natural notion of a Fourier algebra of a C*-dynamical system. Notably, we show that it can equivalently be defined either as the closure of the multipliers with finite support or as the closure of multipliers coming from regular equivariant representations. Secondly, we undertake an analysis of the equivariant representation theory of commutative systems. Our main result about this is a description of the group theoretical aspect of the equivariant representation theory in terms of cocycle representations of the underlying transformation group. 
		\end{abstract}
	
\maketitle

\section{Introduction}

The Fourier-Stieltjes algebra of a locally compact group was introduced by Eymard in \cite{eymard}, with the objective of defining a function algebra on the group that could serve as a dual object also to non-abelian groups in lieu of the Pontryagin dual. The Fourier-Stieltjes algebra consists of coefficient functions of unitary representations of the group, but it can also be defined as the span of the positive definite functions on the group. This set admits a norm with respect to which it is isometrically isomorphic to the dual of the universal group C*-algebra of the underlying group. Inside the Fourier-Stieltjes algebra, resides a distinguished ideal, called the Fourier algebra. The Fourier algebra is the closure inside the Fourier-Stieltjes algebra of the functions with compact support, equivalently the closure of the coefficient functions of the left-regular representation. The Fourier algebra can also be identified with the predual of the group von Neumann algebra associated to the group. In the abelian case, the Fourier-Stieltjes algebra is isometrically isomorphic to the convolution algebra of Radon measures on the Pontryagin dual of the group via the Fourier-Stieltjes transform. By identifying the algebra of integrable functions with the measures that are absolutely continuous with respect to the Haar measure, the Fourier-Stieltjes transform restricts to an isometric isomorphism between the Fourier algebra and the algebra of integrable functions on the dual, the Fourier transform. 

Both the Fourier and the Fourier-Stieltjes algebra contain ample analytic information about the underlying group. For example, the elements of the Fourier-Stieltjes algebra induce completely bounded maps of the universal and reduced group C*-algebras. Even more interestingly, it was shown by Walter in \cite{walter72} that both the Fourier and Fourier-Stieltjes algebras are perfect invariants of the group in the sense that if there is an isometric isomorphism between the Fourier/Fourier-Stieltjes algebras of two locally compact groups, those groups are topologically isomorphic. Building on Walter's work, Arendt and de Canniére showed in \cite{arendt1983} that one can discard the norm on the Fourier-Stieltjes algebra at the cost of remembering a double order structure on it and maintain a perfect invariant. This double order structure consists of both the order induced by the cone of positive definite functions inside the Fourier-Stieltjes algebra and the order coming from pointwise comparison. 

In a series of papers, most notable for our purposes \cite{BC2012,BC2015}, Bédos and Conti have studied Fourier theory in the framework of twisted discrete unital C*-dynamical systems. Having studied the class of equivariant representations of such systems, whose coefficient functions induce completely bounded maps of the associated crossed products, they coined a definition of the Fourier-Stieltjes algebra of a C*-dynamical system in \cite{BC2016}. They showed that it can be realized as the span of an approriate class of multipliers of the system, generalizing positive definite functions on a group. In a recent preprint \cite{skalski}, Buss, Kwa{\'s}niewski, McKee and Skalski generalize their theory to the context of twisted actions by étale groupoids on C*-bundles. 

With the results of Walter, Arendt and de Canniére in mind, it is natural to ask how rigidly the Fourier-Stieltjes algebra, considered with appropriate structural data, determines the underlying C*-dynamical system. These questions were first adressed in a subsequent paper \cite{BC2021} by Bédos and Conti. In the present article, we first propose to define the Fourier algebra of a unital discrete C*-dynamical system as the closure inside the Fourier-Stieltjes algebra of the multipliers with finite support. We then show that this algebra coincides with the closure of the multipliers that arise as coefficient functions of regular equivariant representations. Secondly, we provide a study of the equivariant representation theory of commutative systems. Our main result in this regard is that the group theoretical aspect of this representation theory can be formulated in terms of what we call cocycle representations of the underlying group action on Hilbert bundles, building on a concept previously studied for measure spaces. Using these descriptions, we are able to compute the Fourier-Stieltjes algebra in some simple examples, which provide some insight into questions of rigidity and which data may be needed in order to obtain a fine invariant for C*-dynamical systems out of the Fourier-Stieltjes algebra. In addition to their relevance to the study of the Fourier-Stieltjes algebras, these results may be useful in order to construct C*-correspondences over crossed products coming from actions on commutative C*-algebras. \\

\textbf{Outline.} The paper is organized as follows: \cref{chap_prelim} contains some background theory the reader should be familiar with. In \cref{chap_fourier}, we introduce the Fourier algebra of a discrete unital C*-dynamical system and study some of its properties. \cref{chap_comm} contains our analysis of the representation theory of commutative systems. \\

\section{Preliminaries}
\label{chap_prelim}

Throughout this paper, all groups will be discrete. Moreover, all C*-algebras will be unital and a $*$-homomorphism between C*-algebras is always assumed to be unital. By a representation of a C*-algebra, we always mean a $*$-representation. Inner products are linear in the second variable and antilinear in the first. Throughout, $A$ will denote a C*-algebra with unit $1_A$ and $\Gamma$ a discrete group with neutral element $e$. \\

\textbf{C*-dynamical systems and crossed products.} A \textit{ (unital) C*-dynamical system} is a triple $\Sigma = (A, \Gamma, \alpha)$ consisting of a unital C*-algebra $A$ and a discrete group $\Gamma$ acting on $A$ by $*$-automorphisms according to a group homomorphism $\alpha \colon \Gamma \to \aut(A)$. It follows from Gelfand duality that if $\alpha$ is an action of a group $\Gamma$ on a commutative C*-algebra with Gelfand spectrum $\Omega$, there is a uniquely determined action of $\Gamma$ on $\Omega$ such that the action on the C*-algebra is given by $$\alpha_g(f)(x) = f(g^{-1}x) \quad (g \in \Gamma, f \in C(\Omega), x \in \Omega),$$ see for example \cite[Proposition 2.7]{williams}. If $\Gamma$ is a group acting on a compact Hausdorff space $\Omega$, we call the pair $(\Gamma, \Omega)$ a \textit{transformation group}. Thus, there is a perfect correspondence between transformation groups and commutative C*-dynamical systems. 

The space $C_c(\Gamma, A)$ of $A$-valued functions on $\Gamma$ with finite support becomes a unital $*$-algebra with multiplication and involution given by 
\begin{align*}
	(f_1 * f_2)(g) &= \sum_{h \in \Gamma} f_1(h) \alpha_h(f_2(h^{-1}g)) \\
	f_1^*(g) &= \alpha_g(f_1(g^{-1}))^*
\end{align*} for $f_1, f_2 \in C_c(\Gamma, A)$ and $g \in \Gamma$. A \textit{covariant representation} of $\Sigma$ on a Hilbert space $H$ is a pair $(\pi, u)$ consisting of a $*$-homomorphism $\pi \colon A \to \B(H)$ and a group homomorphism $u \colon \Gamma \to \U(H)$ into the group of unitary operators on $H$ such that $$\pi (\alpha_g(a)) = u(g) \pi(a) u(g)^* \quad (g \in \Gamma, a \in A).$$ For $f \in C_c(\Gamma, A)$, set $$\norm{f} = \sup \left\{ \norm{ \sum_{g \in \Gamma} \pi(f(g)) u(g)} \right\},$$ where the supremum is taken over all covariant representations of $\Sigma$. This defines a C*-norm on $C_c(\Gamma, A)$. The \textit{full crossed product} associated to $\Sigma$ is the completion of $C_c(\Gamma, A)$ with respect to this norm, and we denote it by $C^*(\Sigma)$. 

Given a representation $\pi \colon A \to \B(H)$ of $A$ on a Hilbert space $H$, one can construct a \textit{regular covariant representation} $(\tilde{\pi}, \tilde{\lambda}^H)$ of $\Sigma$ on the direct sum $H^\Gamma := \bigoplus_{g \in \Gamma} H$ as follows: 
\begin{align*}
	\left( \tilde{\pi}(a) \xi \right) (h) &= \pi(\alpha_g^{-1}(a)) \xi(h) \\
	\left( \tilde{\lambda}^H (g) \xi \right)(h) &= \xi(g^{-1}h) 
\end{align*} 
for $a \in A, g, h \in \Gamma$ and $\xi \in H^\Gamma$. The integrated form $\Lambda = \tilde{\pi} \times \tilde{\lambda}^H$ is the representation of $C^*(\Sigma)$ on $H^\Gamma$ that is uniquely determined by the formula $$\Lambda (f) = \sum_{g \in \Gamma} \tilde{\pi}(f(g)) \tilde{\lambda}^H(g) \quad (f \in C_c(\Gamma, A)).$$ By picking any faithful representation $\pi \colon A \to \B(H)$ of $A$, one may define the \textit{reduced crossed product $C_{\mathrm{r}}^*(\Sigma)$ associated to $\Sigma$} as the image of $C^*(\Sigma)$ under $\Lambda$, see \cite[Proposition 4.1.5]{brown}. More on crossed products can be found in \cite{williams}. \\

\textbf{Hilbert C*-modules.} We follow the conventions of \cite{lance}, whereto we also refer the reader for more information about Hilbert C*-modules. 

Let $A$ be a C*-algebra. An \textit{inner product $A$-module} is a complex vector space $X$ with a right $A$-module structure that is compatible with the scalar multiplication in the sense that $$\lambda(\xi \cdot a) = (\lambda \xi) \cdot a  = \xi \cdot (\lambda a) \quad (\lambda \in \C, \xi \in X, a \in A),$$ and a map $\langle \cdot , \cdot \rangle \colon A \times A \to A$ such that 
\begin{enumerate}
	\item $\langle \xi, \alpha \eta + \beta \zeta \rangle = \alpha \langle \xi, \eta \rangle + \beta \langle \xi, \zeta \rangle$ for all $\xi, \eta, \zeta \in X$ and $\alpha, \beta \in \C$; 
	\item $\langle \xi, \eta \cdot a \rangle = \langle \xi, \eta \rangle a$ for all $\xi, \eta \in X$ and $a \in A$; 
	\item $\langle \eta, \xi \rangle = \langle \xi, \eta \rangle^* $ for all $\xi, \eta \in X$; 
	\item $\langle \xi, \xi \rangle \geq 0$ with equality if and only if $\xi = 0$ for all $\xi \in X$. 
\end{enumerate} If $X$ is an inner product $A$-module, there is a norm on $X$ given by $$\norm{\xi} = \norm{ \langle \xi, \xi, \rangle }^{\frac{1}{2}} \quad (\xi \in X).$$ If $X$ is complete with respect to this norm, $X$ is called a \textit{Hilbert $A$-module}. We denote by $\L(X)$ the C*-algebra of adjointable operators on $X$. Moreover, we denote by $\I(X)$ the group of invertible bounded $\C$-linear maps of $X$. An adjointable map $U \colon X \to Y$ between Hilbert $A$-modules $X$ and $Y$ is called \textit{unitary} if $UU^* = \mathrm{id}_Y$ and $U^*U = \mathrm{id}_X$. A map is unitary if and only if it is a surjective $A$-linear isometry, see \cite[Theorem 3.5]{lance}.

Let $B$ also be a C*-algebra. Given a Hilbert $A$-module $X$ and a Hilbert $B$-module $Y$ along with a representation $\pi \colon A \to \L(Y)$ of $A$ by adjointable operators on $Y$, there is a way of constructing a tensor product of $X$ and $Y$: Let $X \otimes_{\mathrm{alg}} Y$ denote the tensor product of $X$ and $Y$ as vector spaces. Let $$X \otimes_A Y = \frac{X \otimes_{\mathrm{alg}} Y
}{\mathrm{span} \{ xa \otimes y - x \otimes \pi(a) y : x \in X, y \in Y, a \in A \}}.$$ For $x \in X$ and $y \in Y$, we denote by $x \dotimes y$ the image of $x \otimes y \in X \otimes_{\mathrm{alg}} Y$ in $X \otimes_A Y$. The space $X \otimes_A Y$ can be given the structure of an inner product $B$-module such that 
\begin{align*}
	(x_1 \dotimes y_1) \cdot b &= x \dotimes (y_1 \cdot b) \\
	\langle x_1 \dotimes y_1, x_2 \dotimes y_2 \rangle &= \langle y_1, \pi( \langle x_1, x_2 \rangle ) y_2 \rangle 
\end{align*} 
for $x_1, x_2 \in X, y_1, y_2 \in Y$ and $b \in B$. The completion of $X \otimes_A Y$ with respect to this inner product is called the \textit{internal tensor product of $X$ and $Y$ with respect to $\pi$} and it is denoted by $X \otimes_\pi Y$. For more details, see Chapter 4 of \cite{lance}. \\

\textbf{Equivariant representations and multipliers.} Equivariant representations of twisted C*-dynamical systems were introduced by Bédos and Conti in \cite{BC2012} and further studied in \cite{BC2015}, see also \cite[Section 5]{skalski}. Throughout this section, let $\Sigma = (A, \Gamma, \alpha)$ be a C*-dynamical system. 

\begin{definition}[{\cite[Definition 4.2]{BC2012}}]
	\label{def:equiv} 
	An \textit{equivariant representation} of $\Sigma$ on a Hilbert $A$-module $X$ is a pair $(\rho, v)$ consisting of a $*$-homomorphism $\rho \colon A \to \L(X)$ and a group homomorphism $v \colon \Gamma \to \I(X)$ such that the equations \begin{enumerate}
		\item $\rho(\alpha_g(a)) = v(g) \rho(a) v(g)^{-1}$;
		\item $\alpha_g( \langle \xi, \eta \rangle ) = \langle v(g) \xi, v(g) \eta \rangle $;
		\item $v(g) (\xi \cdot a) = (v(g) \xi) \cdot \alpha_g(a)$
	\end{enumerate}
	hold for all $g \in \Gamma, a \in A$ and $\xi, \eta  \in X$. 
\end{definition}

\begin{remark}
	Notice that it follows from (ii) in the definition above that if $v$ is the group representation in an equivariant representation, $v$ is an isometric representation. Indeed, fix $g \in \Gamma$ and $\xi \in X$. Using (ii) and the fact that $*$-isomorphisms are isometric, one sees that $$ \norm{ v(g) \xi }^2 = \norm{ \langle v(g) \xi, v(g) \xi \rangle }^2 = \norm{ \alpha_g (\langle \xi, \xi \rangle) }^2 = \norm{ \langle \xi, \xi \rangle}^2 = \norm{\xi}^2.$$ 
\end{remark}

\begin{example}[{\cite[Example 4.6]{BC2012}}]
	The \textit{trivial equivariant representation} of $\Sigma$ is the pair $(\ell, \alpha)$, where $\ell \colon A \to \L(A)$ is given by $\ell(a)b = ab$ for $a, b \in A$. 
\end{example}

The next example explains a construction on equivariant representations that will play an important role later in this paper. 

\begin{example}[{\cite[Examples 4.6 and 4.7]{BC2012}}]
	\label{ex:reg_equiv}
	Given an equivariant representation $(\rho, v)$ of $\Sigma$ on a Hilbert $A$-module $X$, one can construct another equivariant representation $(\check{\rho}, \check{v})$ of $\Sigma$ on the direct sum $X^\Gamma := \bigoplus_{g \in \Gamma} X$, called the \textit{associated regular equivariant representation}, as follows: \begin{align*}
		\left( \check{\rho}(a) \xi \right)(h) &= \rho(a) \xi(h) \\
		\left( \check{v}(g) \xi \right)(h) &= v(g) \xi(g^{-1}h)
	\end{align*}
	for $a \in A$ and $g, h \in \Gamma$. The regular equivariant representation associated to the trivial one is referred to as the \textit{regular equivariant representation}. Regular equivariant representations generalize the left-regular representation of a group. 
\end{example}

Note that one can take direct sums and tensor products of equivariant representations: Given a family $\{ (\rho_i, v_i) \}_{i \in I}$ of equivariant representations of $\Sigma$ on Hilbert $A$-modules $X_i$ for $i \in I$, there exists an equivariant representation $\left( \bigoplus_i \rho_i, \bigoplus_i v_i \right)$ of $\Sigma$ on $\bigoplus_i X_i$ such that $$\left( \bigoplus_i \rho_i \right)(a) = \bigoplus_i \rho(a) \quad \text{and} \quad \left( \bigoplus_i v_i \right) (g) = \bigoplus_i v_i(g)$$ for each $a \in A$ and $g \in \Gamma$. Furthermore, if $(\rho_1, v_1)$ and $(\rho_2, v_2)$ are equivariant representations of $\Sigma$ on Hilbert $A$-modules $X_1$ and $X_2$ respectively, there exists an equivariant representation $(\rho_1 \otimes \rho_2, v_1 \otimes v_2)$ of $\Sigma$ on the internal tensor product $X_1 \otimes_{\rho_2} X_2$ such that 
\begin{align*}
	(\rho_1 \otimes \rho_2)(a)(x_1 \dotimes x_2) &= \rho_1(a) x_1 \dotimes x_2 \\
	(v_1 \otimes v_2)(g) (x_1 \dotimes x_2) &= v_1(g) x_1 \dotimes v_2(g) x_2 
\end{align*} 
for all $a \in A, x_1 \in X_1, x_2 \in X_2$ and $g \in \Gamma$. For a more in-depth explanation of these constructions, see \cite[Section 2.2]{BC2016}.

In \cite[Section 5]{BC2016}, it is explained how equivariant representations of $\Sigma$ give rise to C*-correspondences over $C^*(\Sigma)$ and $C^*_{\mathrm{r}}(\Sigma)$. 

A \textit{multiplier} of $\Sigma$ is a map from $\Gamma \times A$ to $A$ that is linear in the second variable. It is sometimes notationally convenient to denote the value of a multiplier $T$ at $(g, a) \in \Gamma \times A$ by $T_g(a)$ rather than $T(g, a)$ and treat $T_g = T(g, \cdot) \colon A \to A$ as a linear self-map of $A$. The set $L(\Gamma, A)$ of multipliers is a unital algebra with $\Gamma$-pointwise addition and composition. Given a multiplier $T$ and an $A$-valued function $f \in C_c(\Gamma, A)$ with finite support, we denote by $T \cdot f$ the $A$-valued function given by $$(T \cdot f)(g) = T_g(f(g)) \quad (g \in \Gamma).$$ 

\begin{definition}[{\cite[Definition 4.1]{BC2016} and \cite[Definition 5.4]{BC2012}}] A multiplier $T$ is called a \textit{full} (resp. \textit{reduced}) multiplier of $\Sigma$ if there exists a bounded linear map on the full (resp. reduced) crossed product associated to $\Sigma$ extending the map $f \mapsto T \cdot f$ (resp. $\Lambda(f) \mapsto \Lambda(T \cdot f)$) on $C_c(\Gamma, A)$ (resp. $\Lambda(C_c(\Gamma, A)))$. 
\end{definition} 

Notice that if $A = \C$, full/reduced multipliers amount to the homonymous concepts for groups, which have attracted much attention, see for example \cite[Chapter 3]{pisier} for a nice introduction.

\begin{definition}
	A multiplier $T$ is called $\Sigma$-\textit{positive definite}, or \textit{positive definite with respect to} $\Sigma$, if for every $n \in \N$ and every choice of $g_1, \ldots, g_n \in \Gamma$ and $a_1, \ldots, a_n \in A$ the matrix $$\left[ \alpha_{g_i} \left( T_{g_i^{-1}g_j} (\alpha_{g_i}^{-1}(a_i^*a_j)) \right) \right]_{1 \leq i, j \leq n}$$ is positive in the matrix algebra $M_n(A)$. The set of $\Sigma$-positive definite multipliers is denoted by $P(\Sigma)$. 
\end{definition}

Note that if $A = \C$, a multiplier of $\Sigma$ is of the form $$T_g(\lambda) = \mu(g) \lambda \quad (g \in \Gamma, \lambda \in \C)$$ for a function $\mu$ on $\Gamma$, in which case $T$ is $\Sigma$-positive definite if and only if $\mu$ is a positive definite function. 

The next example shows the archetypical example of a positive definite multiplier. It also introduces a notation which will be employed frequently in the sequel. 

\begin{example}[{\cite[Example 4.1]{BC2016}}]
	Let $(\rho, v)$ be an equivariant representation of $\Sigma$ on a Hilbert $A$-module $X$ and pick two vectors $\xi, \eta \in X$. One can define a multiplier $T_{\rho, v, \xi, \eta}$ by $$T_{\rho, v, \xi, \eta}(g, a) = \langle \xi, \rho(a) v(g) \eta \rangle \quad (g \in \Gamma, a \in A).$$ It is a simple exercise to use the equivariance relations to show that if $\xi = \eta$, this multiplier becomes $\Sigma$-positive definite. 
\end{example}

The following Gelfand-Raikov type theorem explains how multipliers, equivariant representations and positive definiteness tie together. 

\begin{theorem}[{\cite[Corollary 4.4]{BC2016}}]
	\label{thm:pos_def} 
	For any multiplier $T \in L(\Gamma, A)$, the following conditions are equivalent:  
	\begin{enumerate}
		\item $T$ is $\Sigma$-positive definite. 
		\item There exist an equivariant representation $(\rho, v)$ of $\Sigma$ on a Hilbert $A$-module $X$ and a vector $\xi \in X$ such that $T = T_{\rho, v, \xi, \xi}$. 
		\item $T$ is a reduced multiplier such that the induced map on the reduced crossed product is completely positive. 
		\item $T$ is a full multiplier such that the induced map on the full crossed product is completely positive. 
	\end{enumerate}
\end{theorem}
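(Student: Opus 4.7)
The plan is to prove the theorem in three parts: first establish (ii)$\Rightarrow$(i) and (i)$\Rightarrow$(ii) as a Gelfand–Raikov-type pair, then close the cycle by using the formalism of \cite[Section~5]{BC2016} linking equivariant representations to completely positive maps on the crossed products.

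For (ii)$\Rightarrow$(i), the calculation is essentially the one alluded to in the preceding example. Starting from $T = T_{\rho, v, \xi, \xi}$ and applying the three equivariance relations of \cref{def:equiv} in sequence, I would show
\[
\alpha_{g_i}\!\left(T_{g_i^{-1}g_j}\!\left(\alpha_{g_i}^{-1}(a_i^* a_j)\right)\right) = \langle \rho(a_i) v(g_i)\xi,\, \rho(a_j) v(g_j)\xi\rangle,
\]
which exhibits the matrix in the definition of $\Sigma$-positive definiteness as a Gram matrix in $M_n(A)$, hence positive.

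For (i)$\Rightarrow$(ii) I would run a GNS/Paschke-type construction. Let $X_0$ be the free right $A$-module on generators $\{[g,a] : g \in \Gamma,\, a \in A\}$, equipped with the $A$-sesquilinear form determined on generators by
\[
\langle [g_1,a_1], [g_2,a_2]\rangle := \alpha_{g_1}\!\left(T_{g_1^{-1}g_2}\!\left(\alpha_{g_1}^{-1}(a_1^* a_2)\right)\right).
\]
The $\Sigma$-positive definiteness of $T$ is precisely the statement that this form is positive semidefinite on $X_0$. After quotienting by null vectors and completing, the formulas $\rho(b)[g,a] := [g, ba]$ and $v(h)[g,a] := [hg, \alpha_h(a)]$ extend to adjointable operators on the resulting Hilbert $A$-module $X$, and the three axioms of \cref{def:equiv} reduce to direct verifications on generators. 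Boundedness of $\rho(b)$ follows from applying the positivity assumption to the collection $\{(g_i, ba_i)\}_i$ together with $\|b\|^2 1_A - b^* b \geq 0$. Finally, the vector $\xi := [e, 1_A]$ satisfies $T = T_{\rho, v, \xi, \xi}$ by inspection.

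For (ii)$\Leftrightarrow$(iii)$\Leftrightarrow$(iv), I would exploit the correspondence between equivariant representations of $\Sigma$ on Hilbert $A$-modules and representations of $C^*(\Sigma)$ by adjointable operators. Given (ii), the integrated representation $\rho \times v$ of $C^*(\Sigma)$ on $X$ admits the compression $c \mapsto \langle \xi, (\rho \times v)(c)\xi\rangle$, which restricts on $C_c(\Gamma, A)$ to $f \mapsto T \cdot f$ and hence yields (iv); passing to the associated regular equivariant representation of \cref{ex:reg_equiv} and noting that its integrated form factors through $C^*_r(\Sigma)$ upgrades this to (iii). Conversely, Paschke's Hilbert-module analogue of Stinespring's theorem dilates a CP extension on either $C^*(\Sigma)$ or $C^*_r(\Sigma)$ to a $*$-representation of the respective crossed product on a Hilbert $A$-module with a distinguished cyclic vector $\xi$; the covariance algebra structure of $C_c(\Gamma, A)$ then forces this representation to be the integrated form of some equivariant pair $(\rho, v)$, and by construction its $(\xi,\xi)$-coefficient multiplier is $T$. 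I expect the main obstacle to be the clean extraction of an equivariant representation from the Stinespring dilation—showing that the dilating representation genuinely decomposes as $\rho \times v$ with the equivariance relations intact—which is where the C*-correspondence framework of \cite[Section~5]{BC2016} is needed.
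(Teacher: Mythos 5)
The paper itself does not prove this theorem: it is quoted as background from \cite[Corollary 4.4]{BC2016}, so your proposal has to be measured against the argument in that source. Your (ii)$\Rightarrow$(i) step is correct and is exactly the ``simple exercise'' the paper alludes to just before the statement: the three equivariance relations turn the positive-definiteness matrix into the Gram matrix $\left[ \langle \rho(a_i)v(g_i)\xi, \rho(a_j)v(g_j)\xi \rangle \right]_{i,j}$, which is positive in $M_n(A)$. Your (i)$\Rightarrow$(ii) construction is, up to repackaging, the GNS-type construction behind \cite[Theorem 4.5]{BC2016}; the one point you must make explicit is that $v(h)$ is not $A$-linear, so its ``extension from generators'' to the free module has to be taken $\alpha_h$-twisted, namely $v(h)([g,a]\cdot b) = [hg, \alpha_h(a)]\cdot \alpha_h(b)$, in accordance with \cref{def:equiv}(iii). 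With that convention your boundedness argument, applying positive definiteness to the collection $\{(g_i, c a_i)\}_i$ with $c := (\|b\|^2 1_A - b^*b)^{1/2}$, is the standard and correct one.

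The genuine gap is in the bridge to (iii) and (iv). There is no ``integrated representation $\rho \times v$ of $C^*(\Sigma)$ on $X$'': in an equivariant representation $v$ takes values in $\I(X)$, and by \cref{def:equiv}(iii) each $v(g)$ satisfies $v(g)(\xi \cdot a) = (v(g)\xi)\cdot \alpha_g(a)$, so it is neither $A$-linear nor adjointable, and $(\rho, v)$ is not a covariant representation --- there is nothing to integrate on $X$. Moreover, even formally, the compression $c \mapsto \langle \xi, (\rho \times v)(c) \xi \rangle$ would be $A$-valued, whereas (iii) and (iv) demand maps of the crossed products \emph{into themselves} extending $f \mapsto T \cdot f$, resp.\ $\Lambda(f) \mapsto \Lambda(T \cdot f)$; so this compression could not witness either condition even if it existed. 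The mechanism actually used in \cite{BC2012,BC2016} is different: one tensors $(\rho, v)$ with an honest covariant representation $(\pi, u)$ of $\Sigma$ on a Hilbert space $H$ to obtain a covariant representation $(\rho \dotimes \pi, v \dotimes u)$ on $X \otimes_\pi H$ --- the twist in $v(g)$ cancels against the balancing over $A$, which is exactly why $v(g) \dotimes u(g)$ is a genuine unitary --- and then compresses its integrated form by the map $h \mapsto \xi \dotimes h$; taking $(\pi, u)$ universal gives (iv) and taking it regular gives (iii), the latter resting on the absorption phenomenon of which \cref{prop:equiv_fell} is the module-level analogue. Your converse has the mirror-image problem: KSGNS/Stinespring applied to a completely positive map on $C^*(\Sigma)$ or $C^*_{\mathrm{r}}(\Sigma)$ produces a representation of the crossed product, i.e.\ a covariant pair $(\pi, u)$ with $u$ unitary and adjointable, on a module over the crossed product --- an object in a different category from an equivariant pair, and there is no canonical procedure extracting $(\rho, v)$ on a Hilbert $A$-module from it. In \cite{BC2016} the implications (iii)$\Rightarrow$(i) and (iv)$\Rightarrow$(i) are instead proved directly, by testing complete positivity on matrices built from elements of $C_c(\Gamma, A)$ supported on single group elements and reading off the positive-definiteness matrices from the canonical $A$-valued coefficients, after which (i)$\Rightarrow$(ii) is precisely your GNS construction; the cycle then closes without any dilation.
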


If $(\rho, v)$ is an equivariant representation of $\Sigma$ on a Hilbert module $X$, a vector $\xi \in X$ is called \textit{cyclic} for $(\rho, v)$ if $$\mathrm{span} \{(\rho(a)v(g) \xi) \cdot b : a, b \in A, g \in \Gamma \}$$ is dense in $X$. It will be useful to know that by \cite[Theorem 4.5]{BC2016}, the vector $\xi$ in condition (ii) above may be chosen to be cyclic. \\

\textbf{Fourier and Fourier-Stieltjes algebras.} The Fourier-Stieltjes algebra of a locally compact group consists of coefficient functions of unitary representations of the group. A GNS-like argument shows that the Fourier-Stieltjes algebra coincides with the span of the positive definite functions on the group. As a Banach space, it can be identified with the dual of the group C*-algebra. The Fourier algebra is the closure inside the Fourier-Stieltjes algebra of the functions with compact support. For more about the classical theory of Fourier and Fourier-Stieltjes algebras, we recommend \cite{kaniuth}. We proceed with a brief recapitulation of the Fourier-Stieltjes algebra of a C*-dynamical system. Proofs and more details can be found in \cite{BC2016}. 

Fix a C*-dynamical system $\Sigma = (A, \Gamma, \alpha)$. 

\begin{definition}
	The \textit{Fourier-Stieltjes algebra} of $\Sigma$ is the span of the $\Sigma$-positive multipliers inside $L(\Gamma, A)$. It is denoted by $B(\Sigma)$. 
\end{definition}

It follows from \cref{thm:pos_def} and polarization that the Fourier-Stieltjes algebra coincides with the set of coefficient functions of equivariant representations and that its elements induce completely bounded maps of the associated crossed products. Moreover, one can show that the Fourier-Stieltjes algebra becomes a Banach algebra with the norm given by $$\norm{T} = \inf \{ \norm{\xi} \norm{\eta} : T = T_{\rho, v, \xi, \eta} \},$$ where the infimum is taken over all equivariant representations implementing $T$ as a coefficient function, see \cite[Proposition 3.1]{BC2016}. The following properties of the Fourier-Stieltjes algebra will be used later in this paper.

\begin{proposition}[{\cite[Proposition 3.2, Corollary 4.3]{BC2016}}]
	\label{prop:FS_alg}
	\begin{enumerate}
		\item For every $T \in B(\Sigma)$, we have $$\sup \{ \norm{T_g} : g \in \Gamma \} \leq \norm{T}.$$ 
		\item There is a contractive embedding $\mu \mapsto T^\mu$ of the Fourier-Stieltjes algebra $B(\Gamma)$ of the group into $B(\Sigma)$, where $T^\mu$ is given by $$T^\mu (g, a) = \mu(g) a \quad (g \in \Gamma, a \in A).$$ 
		\item The set $P(\Sigma)$ of $\Sigma$-positive definite multipliers is a cone inside $B(\Sigma)$. 
	\end{enumerate}
\end{proposition}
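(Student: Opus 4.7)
The plan is to reduce each part to the characterization from \cref{thm:pos_def} (every $\Sigma$-positive definite multiplier is a diagonal coefficient of an equivariant representation) combined with the standard constructions on equivariant representations (direct sums, scaling, tensor products) recalled in the preliminaries. For (i), I would pick any equivariant representation $(\rho, v)$ on a Hilbert $A$-module $X$ and vectors $\xi, \eta \in X$ implementing $T \in B(\Sigma)$, and then apply Cauchy--Schwarz together with the facts that $\rho$ is contractive and $v(g)$ is isometric (by the remark after \cref{def:equiv}):
\begin{equation*}
\norm{T_g(a)} = \norm{\langle \xi, \rho(a) v(g) \eta\rangle} \leq \norm{\xi}\norm{\rho(a)}\norm{v(g)\eta} \leq \norm{\xi}\norm{\eta}\norm{a}.
\end{equation*}
Hence $\norm{T_g} \leq \norm{\xi}\norm{\eta}$, and taking the infimum over all such implementations yields $\norm{T_g} \leq \norm{T}$.

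For (ii), the task is to produce, for each $\mu \in B(\Gamma)$, an equivariant representation implementing $T^\mu$ whose canonical vectors have the same norms as witnesses for $\mu$. Writing $\mu(g) = \langle \xi, u(g)\eta\rangle$ for a unitary representation $u \colon \Gamma \to \U(H)$ on a Hilbert space $H$, I would take $X = H \dotimes A$ (viewing $H$ as a Hilbert $\C$-module tensored internally with $A$ via the unital embedding $\C \to A$), so that $(h \dotimes a) \cdot b = h \dotimes (ab)$ and $\langle h_1 \dotimes a_1, h_2 \dotimes a_2\rangle = \langle h_1, h_2\rangle a_1^* a_2$. I would then define
\begin{equation*}
\rho(a)(h \dotimes b) = h \dotimes ab, \qquad v(g)(h \dotimes b) = u(g) h \dotimes \alpha_g(b),
\end{equation*}
and check the three axioms of \cref{def:equiv} using the unitarity of $u$ and the $*$-automorphism property of $\alpha_g$. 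Since $\alpha_g(1_A) = 1_A$, a direct computation gives $\langle \xi \dotimes 1_A, \rho(a) v(g)(\eta \dotimes 1_A)\rangle = \mu(g) a = T^\mu(g, a)$, so $T^\mu = T_{\rho, v, \xi \dotimes 1_A, \eta \dotimes 1_A}$. The identity $\norm{\xi \dotimes 1_A} = \norm{\xi}$ is immediate from the inner product formula, and passing to the infimum over all witnesses of $\mu$ yields $\norm{T^\mu} \leq \norm{\mu}_{B(\Gamma)}$. Linearity and multiplicativity of $\mu \mapsto T^\mu$ are pointwise obvious, and injectivity follows from $T^\mu(g, 1_A) = \mu(g) 1_A$.

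For (iii), closure of $P(\Sigma)$ under positive scaling is immediate from $\lambda T_{\rho, v, \xi, \xi} = T_{\rho, v, \sqrt{\lambda}\xi, \sqrt{\lambda}\xi}$ for $\lambda \geq 0$, and closure under addition is handled by taking the direct sum of two diagonal implementations: if $T = T_{\rho_1, v_1, \xi, \xi}$ and $S = T_{\rho_2, v_2, \eta, \eta}$, then the pair $(\rho_1 \oplus \rho_2, v_1 \oplus v_2)$ paired with the vector $\xi \oplus \eta$ realizes $T + S$ as a diagonal coefficient. The only step with nontrivial content is the construction in (ii): one must track how the twisted $A$-linearity axiom $v(g)(y \cdot a) = (v(g)y) \cdot \alpha_g(a)$ interacts with the right action on the second factor, and how $\alpha$-invariance of the $A$-valued inner product relies on $\alpha_g$ preserving the involution; everything else is a routine application of \cref{thm:pos_def} and elementary estimates in Hilbert modules.
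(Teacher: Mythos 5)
Your proposal is correct, and it is worth noting that the paper itself offers no proof of this proposition: it is quoted verbatim from \cite{BC2016} (Proposition~3.2 and Corollary~4.3) as background, so the comparison is with that source rather than with an argument in the text. Your part (i) is exactly the standard argument: Cauchy--Schwarz for the $A$-valued inner product, contractivity of the unital $*$-homomorphism $\rho$ into $\L(X)$, isometry of $v(g)$ (the remark after \cref{def:equiv}), and then the infimum defining $\norm{T}$. For (ii), your explicit witness $X = H \dotimes A$ with $\rho(a)(h \dotimes b) = h \dotimes ab$ and $v(g)(h \dotimes b) = u(g)h \dotimes \alpha_g(b)$ is a clean self-contained construction, and the checks you indicate all go through; in particular $v(g)$ is well defined and isometric because $\langle v(g)(h_1 \dotimes b_1), v(g)(h_2 \dotimes b_2) \rangle = \langle h_1, h_2 \rangle\, \alpha_g(b_1^* b_2)$, which is precisely axiom (ii) of \cref{def:equiv}, and your norm bound uses the standard fact that $\norm{\mu}_{B(\Gamma)}$ is the infimum of $\norm{\xi}\norm{\eta}$ over coefficient realizations of $\mu$. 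One alternative worth knowing, closer in spirit to how positive definiteness is phrased in \cite{BC2016}: for $\mu$ positive definite on $\Gamma$, the matrix $\left[ \alpha_{g_i}\left( (T^\mu)_{g_i^{-1}g_j}(\alpha_{g_i}^{-1}(a_i^* a_j)) \right) \right]$ equals $\left[ \mu(g_i^{-1}g_j)\, a_i^* a_j \right]$, the Schur product of a positive scalar matrix with the positive matrix $[a_i^* a_j]$, hence is positive; this gives $\Sigma$-positive definiteness of $T^\mu$ (and part (iii), since the defining matrices depend linearly on $T$ and positivity survives sums and nonnegative scalars) without constructing any representation --- though your tensor witness is still needed for the contractivity in (ii), and your direct-sum argument for (iii) is equally valid.
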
 

\textbf{Hilbert bundles, Hilbert $C(\Omega)$-modules and their isometries.} Hilbert C*-modules over commutative C*-algebras can be completely described in terms of Hilbert bundles. The reader will find more about Banach bundles in \cite{fell_doran}. For the classification of Hilbert modules over commutative C*-algebras, the reader should consult \cite{dixmier,takahashi_dual,takahashi_rep}. Let $\Omega$ denote a compact Hausdorff space. 

\begin{definition}
	\label{def:banach_bundle}
	A \textit{Banach bundle} over $\Omega$ is a pair $(B, \pi)$ consisting of a topological space $B$ and a continuous open surjection $\pi \colon B \to \Omega$ such that each fiber $B_x := \pi^{-1}(x)$ carries a Banach space structure satisfying the following conditions: 
	\begin{enumerate}
		\item The map $b \mapsto \norm{b}$ from $B$ to $\R$ is continuous. 
		\item The operation $+$ is a continuous function from $\{ (b, c) \in B \times B : \pi(b) = \pi(c) \}$ to $B$. 
		\item For each $\lambda \in \C$, the map $b \mapsto \lambda b$ from $B$ to $B$ is continuous. 
		\item If $x \in \Omega$ and $(b_i)_i$ is net in $B$ such that $\norm{b_i} \xrightarrow{i} 0$ and $\pi(b_i) \xrightarrow{i} x$ in $\Omega$, then $b_i \xrightarrow{i} 0_x,$ where $0_x$ denotes the zero element in $B_x$. 
	\end{enumerate}   
\end{definition}

It follows from the definition that the scalar multiplication map $(\lambda, b) \mapsto \lambda b$ is a continuous map from $\C \times B$ to $B$ and that the subspace topology of $B_x$ inherited from $B$ coincides with the norm topology for each $x \in \Omega$, see \cite[Propositions 13.10 and 13.11]{fell_doran}. A \textit{continuous section} of a Banach bundle $\pi \colon B \to \Omega$ is a continuous map $\xi \colon \Omega \to B$ such that $\pi(\xi(x)) = x$ for every $x \in \Omega$. The space of continuous sections of $\pi \colon B \to \Omega$ is a Banach space when equipped with the norm given by $$\norm{\xi} = \sup \{ \norm{\xi(x)} : x \in \Omega \},$$ which we denote by $\Gamma(\pi)$. 

If $\pi \colon B \to \Omega$ is a Banach bundle such that each fiber is a Hilbert space, $\pi$ is called a \textit{Hilbert bundle}, in which case the definition above guarantees that the inner product is a continuous map from $\{ (b_1, b_2) \in B \times B : \pi(b_1) = \pi(b_2) \}$ to $\C$. If $\pi$ is a Hilbert bundle, its continuous section space $\Gamma(\pi)$ can be organized as a Hilbert $C(\Omega)$-module as follows: The right action of $C(\Omega)$ is given by pointwise scaling, i.e., $$(\xi \cdot f)(x) = f(x) \xi(x) \quad (\xi \in \Gamma(\pi), f \in C(\Omega), x \in \Omega),$$ and the inner product is taken in each fiber, i.e., $$\langle \xi, \eta \rangle (x) = \langle \xi(x), \eta(x) \rangle \quad (\xi, \eta \in \Gamma(\pi), x \in \Omega).$$ In fact, every Hilbert $C(\Omega)$-module arises this way, see for example \cite[Corollary 3.13]{takahashi_rep}.

\begin{theorem}
	\label{thm:hilb_mods}
	Every Hilbert $C(\Omega)$-module is unitarily equivalent to the continuous section space of a Hilbert bundle over $\Omega$.  
\end{theorem}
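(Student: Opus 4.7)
The plan is to build the bundle by taking fiberwise quotients of $X$ and then use Fell's standard topologization lemma to recover $X$ as the section space. Fix a Hilbert $C(\Omega)$-module $X$. For each $x \in \Omega$ let $I_x = \{ f \in C(\Omega) : f(x) = 0 \}$, and put $X_x = X / N_x$ where $N_x = \{\xi \in X : \langle \xi, \xi \rangle(x) = 0\}$. Using $\langle \xi \cdot f, \xi \cdot f \rangle(x) = |f(x)|^2 \langle \xi, \xi\rangle(x)$ and the Cauchy-Schwarz inequality in $X$, one checks that $N_x$ equals the closure of $X \cdot I_x$ and is a closed subspace. The formula $\langle \xi + N_x, \eta + N_x\rangle_x = \langle \xi, \eta\rangle(x)$ descends to a genuine inner product on $X_x$, and completeness follows because the seminorm $\|\xi\|_x := |\langle \xi,\xi\rangle(x)|^{1/2}$ is dominated by $\|\xi\|$, so $X_x$ is the Hausdorff completion of $X$ under $\|\cdot\|_x$, and since $X$ is complete this completion is already complete.

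Next, set $B = \bigsqcup_{x \in \Omega} X_x$ with the evident projection $\pi$, and for each $\xi \in X$ write $\hat{\xi}(x) \in X_x$ for its image. Topologize $B$ by declaring a base of neighborhoods of a point $b_0 \in X_{x_0}$ to consist of the sets
\[
W(\xi, U, \varepsilon) = \bigl\{ b \in B : \pi(b) \in U,\ \|b - \hat{\xi}(\pi(b))\|_{\pi(b)} < \varepsilon \bigr\}
\]
for $\xi \in X$ with $\|\hat{\xi}(x_0) - b_0\|_{x_0} < \varepsilon/2$, $U \ni x_0$ open, and $\varepsilon > 0$. The continuity of $\Omega \ni x \mapsto \langle \xi,\xi\rangle(x)$ forces $x \mapsto \|\hat{\xi}(x)\|_x$ to be continuous, which is precisely the input needed in Fell's bundle construction \cite[Chapter II, Theorem 13.18]{fell_doran}: the above base makes $\pi \colon B \to \Omega$ a Banach bundle with continuous fibers $X_x$, and each $\hat{\xi}$ is automatically a continuous section. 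Because the inner product is a polarization of the norm-squared, continuity of $\|\cdot\|$ on $B$ upgrades to continuity of $\langle\cdot,\cdot\rangle$ on the fibered product, so $\pi$ is a Hilbert bundle.

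It remains to identify $X$ with $\Gamma(\pi)$ via $\Phi \colon \xi \mapsto \hat{\xi}$. By construction $\langle \hat{\xi},\hat{\eta}\rangle(x) = \langle\xi,\eta\rangle(x)$ for all $x$, so $\Phi$ preserves the $C(\Omega)$-valued inner product and the right $C(\Omega)$-action, and is in particular an isometry onto its image. For surjectivity, fix $\sigma \in \Gamma(\pi)$ and $\varepsilon > 0$. For each $x \in \Omega$ choose $\xi_x \in X$ with $\hat{\xi_x}(x) = \sigma(x)$; by continuity of $\sigma$ and of $y \mapsto \|\hat{\xi_x}(y)\|_y$, there is an open neighborhood $U_x \ni x$ with $\|\sigma(y) - \hat{\xi_x}(y)\|_y < \varepsilon$ for all $y \in U_x$. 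Extract a finite subcover $U_{x_1},\ldots,U_{x_n}$, take a subordinate partition of unity $\{\varphi_i\} \subset C(\Omega)$, and set $\xi_\varepsilon = \sum_i \xi_{x_i} \cdot \varphi_i \in X$. A pointwise convexity argument using the triangle inequality in each fiber yields $\|\sigma - \hat{\xi_\varepsilon}\|_\infty \le \varepsilon$. Running this for $\varepsilon = 1/n$ produces a Cauchy sequence in $X$ (since $\Phi$ is isometric) whose limit $\xi \in X$ satisfies $\hat{\xi} = \sigma$.

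The main obstacle is the surjectivity step: one must invoke the bundle axiom (iv) to know that near each point $x_0$ the sections in the image of $\Phi$ exhaust a neighborhood of $\sigma(x_0)$ uniformly in the fiber norm, and then carefully carry out the partition-of-unity patching so that the local approximants glue into a single element of $X$ rather than merely a section. The topologization step, while standard, also hinges on verifying that the Fell base is consistent, which requires the continuity (not merely upper semicontinuity) of $x \mapsto \|\hat{\xi}(x)\|_x$; this is where the assumption that $X$ is a Hilbert $C(\Omega)$-module (with a $C(\Omega)$-valued, hence continuous, inner product) is essential.
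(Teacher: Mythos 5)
The paper itself does not prove this theorem: it is imported from the literature, with the citation \cite[Corollary 3.13]{takahashi_rep}, and your construction — localize $X$ at each $x \in \Omega$ to obtain fibers $X_x = X/N_x$, topologize the total space via Fell's criterion \cite[Chapter II, Theorem 13.18]{fell_doran} using the continuity of $x \mapsto \|\hat{\xi}(x)\|_x = \langle \xi, \xi \rangle(x)^{1/2}$, and prove surjectivity of $\Phi$ by partition-of-unity patching — is precisely the standard proof behind that citation. The identification $N_x = \overline{X \cdot I_x}$, the applicability of Fell's theorem (the candidate sections $\hat{\xi}$ exhaust, not merely span densely, each fiber, and their norms are continuous because the inner product is $C(\Omega)$-valued), and the surjectivity step are all sound: $y \mapsto \|\sigma(y) - \hat{\xi}_x(y)\|_y$ is genuinely continuous by bundle axioms (i) and (ii) of \cref{def:banach_bundle}, and the final Cauchy-sequence detour can even be shortened, since the image of the isometry $\Phi$ is complete, hence closed, and you have shown it is dense in $\Gamma(\pi)$.

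One step, however, is wrongly justified: completeness of the fibers. You assert that $X_x$ is the Hausdorff completion of $X$ under $\|\cdot\|_x$ and that ``since $X$ is complete this completion is already complete.'' That principle is false: the Hausdorff quotient of a complete space by the null space of a dominated seminorm need not be complete in the induced norm — consider $C[0,1]$ with the $L^2$ seminorm, whose Hausdorff quotient is the incomplete space $C[0,1]$ in the $L^2$ norm, with completion $L^2[0,1]$. Completeness of $X_x$ is true, but for a different reason: the inner-product norm on $X/N_x$ coincides with the Banach quotient norm. Indeed, $\|\xi\|_x \leq \|\xi + \eta\|$ for every $\eta \in N_x$, since the cross terms in $\langle \xi + \eta, \xi + \eta \rangle(x)$ vanish by Cauchy--Schwarz; conversely, given $\varepsilon > 0$, choose $g \in C(\Omega)$ with $g(x) = 1$, $0 \leq g \leq 1$, supported in the open set where $\langle \xi, \xi \rangle < \langle \xi, \xi \rangle(x) + \varepsilon$; then $\xi \cdot (1-g) \in X \cdot I_x \subseteq N_x$ and $\|\xi - \xi \cdot (1-g)\|^2 = \|\xi \cdot g\|^2 \leq \langle \xi, \xi \rangle(x) + \varepsilon$. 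Hence $X/N_x$ with $\|\cdot\|_x$ is isometric to the quotient of the Banach space $X$ by the closed subspace $N_x$, and is therefore complete. (Alternatively, $X \cdot I_x$ is already closed by Cohen--Hewitt factorization, and $X/X I_x$ is a Hilbert module over $C(\Omega)/I_x \cong \C$, i.e.\ a Hilbert space.) This matters: if the fibers were not complete, one would have to complete them, and then the surjectivity of $\Phi$ onto the section space of the enlarged bundle would no longer follow from your argument. With this repair, your proof is correct.
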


\begin{example}
	\label{ex:triv_bundle}
	Fix a Hilbert space $H$ and consider the trivial bundle $B = \Omega \times H$, where the map $\pi \colon B \to \Omega$ is the projection onto the first coordinate. The continuous section space of this bundle is $C(\Omega, H)$, the space of continuous $H$-valued maps on $\Omega$ with the supremum norm. 
\end{example}

The classical Banach-Stone theorem provides a useful description of the surjective linear isometries of spaces of complex-valued functions on a compact Hausdorff space. There exist generalizations of this result to Banach space valued functions under different hypotheses. For the purview of this paper, the article \cite{HsuWong2011} by Hsu and Wong on Banach-Stone like theorems for continuous section spaces of Banach bundles is particularly interesting. For the case of Hilbert bundles, their theorem can be stated as follows: 

\begin{theorem}[{\cite[Theorem 1.3]{HsuWong2011}}]
	\label{thm:hsu_wong}
	Let $\pi \colon H \to \Omega$ be a Hilbert bundle over $\Omega$ with fibers $H_x := \pi^{-1}(x)$ ($x \in \Omega$), and suppose that $V \colon \Gamma(\pi) \to \Gamma(\pi)$ is a surjective isometry of the continuous section space of $\pi$. Then there exist a homeomorphism $\sigma \colon \Omega \to \Omega$ and a unitary operator $u(x) \colon H_{\sigma(x)} \to H_x$ for each $x \in \Omega$ such that $$(V \xi)(x) = u(x) \xi(\sigma(x)) \quad (\xi \in \Gamma(\pi), x \in \Omega).$$ 
\end{theorem}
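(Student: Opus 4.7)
The plan is to proceed in the spirit of the classical Banach-Stone theorem, in two stages: first, extract the homeomorphism $\sigma$ of $\Omega$ from the isometric action of $V$ on $\Gamma(\pi)$, and second, use $\sigma$ to build the fiberwise unitaries $u(x)$.

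For the first stage, I would introduce for each $x \in \Omega$ the closed $\C$-linear subspace
\[
N_x = \{\xi \in \Gamma(\pi) : \xi(x) = 0\}.
\]
The quotient $\Gamma(\pi)/N_x$ is canonically isometric to the fiber $H_x$ via evaluation $\xi \mapsto \xi(x)$, which is surjective because a Hilbert bundle admits enough sections (one may multiply a local section of nonzero norm at $x$ by a suitable bump function). The crux of the argument is to give a purely metric characterization of the family $\{N_x\}_{x \in \Omega}$ inside $\Gamma(\pi)$, so that the surjective isometry $V$ is forced to permute the family. A natural tool here is the theory of $M$-ideals: one expects that the $N_x$ are precisely the maximal proper $M$-ideals of $\Gamma(\pi)$. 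Since $M$-ideals are preserved under surjective linear isometries, this yields a bijection $\sigma : \Omega \to \Omega$ with $V(N_{\sigma(x)}) = N_x$ for every $x \in \Omega$. Dually, one could characterize the evaluation functionals $\xi \mapsto \langle v, \xi(x)\rangle$, for unit vectors $v \in H_x$, as a distinguished subset of the unit sphere of $\Gamma(\pi)^*$ invariant under $V^*$, and read off $\sigma$ from the induced action on base points.

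For the second stage, once $\sigma$ is in hand, $V$ descends to a well-defined surjective $\C$-linear isometry $u(x) : H_{\sigma(x)} \to H_x$ for each $x$, which is automatically unitary as a surjective linear isometry between Hilbert spaces. The identity $(V\xi)(x) = u(x)\xi(\sigma(x))$ then holds by construction. Continuity of $\sigma$ follows from continuity of sections: if $x_i \to x$ in $\Omega$, then $(V\xi)(x_i) \to (V\xi)(x)$ for every $\xi \in \Gamma(\pi)$, and by unpacking this through the formula and varying $\xi$ (using that a Hilbert bundle has enough sections to separate points in the Hausdorff sense), one concludes $\sigma(x_i) \to \sigma(x)$. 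Applying the same argument to $V^{-1}$ shows that $\sigma$ is a homeomorphism. The main obstacle is the first stage: the intrinsic characterization of the kernels $N_x$, because $\Gamma(\pi)^*$ is considerably more intricate than $C(\Omega)^*$ — each fiber contributes a whole unit sphere worth of evaluation functionals, and distinguishing the "point at which one evaluates" from the "vector along which one evaluates" is the genuine vector-valued content of Banach-Stone that Hsu and Wong establish. I would invoke their technical analysis at precisely this point rather than reproduce it.
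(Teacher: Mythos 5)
There is a structural point to make first: the paper does not prove this statement at all --- it is imported verbatim as \cite[Theorem 1.3]{HsuWong2011}, and the surrounding text explicitly treats it as a quoted black box. So there is no internal proof to compare your attempt against; the only fair comparison is between your sketch and what would be needed for a genuine self-contained proof. Your two-stage plan is indeed the classical route to vector-valued Banach--Stone theorems (the $M$-structure/function-module approach, or dually the extreme-point analysis of the dual ball), and stage two is fine in outline: once $V(N_{\sigma(x)}) = N_x$ is established, the descent to surjective linear isometries $u(x) \colon H_{\sigma(x)} \to H_x$ (unitary, since surjective linear isometries of Hilbert spaces are unitary), the verification of the formula, and the continuity of $\sigma$ via the existence of enough continuous sections (available over a compact base, cf.\ \cite[Remark 13.19]{fell_doran}) together with the same argument applied to $V^{-1}$ all go through as you describe. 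Incidentally, your formulation $u(x) \colon H_{\sigma(x)} \to H_x$ is the correct reading: the statement as printed, with $u(x) \colon H_x \to H_x$, is not literally compatible with $\xi(\sigma(x)) \in H_{\sigma(x)}$, and the paper itself uses the corrected typing $u(x,g) \colon H_{g^{-1}x} \to H_x$ in \cref{def:cocycle_rep}.

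The genuine gap is the one you flag yourself: the intrinsic metric characterization of the kernels $N_x$ --- equivalently, the claim that they are exactly the maximal proper $M$-ideals of $\Gamma(\pi)$, or dually that $V^*$ carries the set of fiberwise evaluation functionals onto itself in a way that respects base points --- is asserted and then outsourced to ``their technical analysis,'' i.e.\ to the very theorem being proved. As a standalone proof this is circular; what your proposal amounts to is the same citation the paper makes, wrapped in a correct description of the strategy. To actually close the gap along your lines you would need two nontrivial inputs: (a) that a Hilbert space has no nontrivial $M$-ideals (true --- in a reflexive space every $M$-ideal is an $M$-summand, and strict convexity of the Hilbert norm rules out nontrivial $\ell^\infty$-decompositions), and (b) the function-module theorem identifying all $M$-ideals of $\Gamma(\pi)$ with the sets $\{\xi \in \Gamma(\pi) : \xi|_E = 0\}$ for closed $E \subseteq \Omega$, which is a substantial theorem in its own right and is precisely the ``genuine vector-valued content'' you correctly locate but do not supply. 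You would also need to address possible zero fibers $H_x = 0$, at which $N_x = \Gamma(\pi)$ is not a proper $M$-ideal, so that $\sigma$ is not pinned down there by your characterization and the homeomorphism claim requires separate care.
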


For the trivial bundle described in \cref{ex:triv_bundle}, this result goes at least as far back as \cite[Theorem 6.2]{jerison}. 

\section{The Fourier algebra} 
\label{chap_fourier}

In their article \cite[Remark 4.9]{BC2016}, Bédos and Conti briefly discuss some possible definitions for a Fourier algebra of a C*-dynamical system. In \cite{skalski}, Buss et al. give a definition based on regular equivariant representations. We suggest a definition which at least formally differs from the one suggested in \cite{skalski}. However, we are able to show that the Fourier algebra coincides with the closure of the multipliers that are coefficients of regular equivariant representations.  

Fix a C*-dynamical system $\Sigma = (A, \Gamma, \alpha)$. The \textit{support} of a multiplier $T \in L(\Gamma, A)$ is the set $\{g \in \Gamma : T_g \neq 0 \}$. 

\begin{definition}
	The \textit{Fourier algebra} $F(\Sigma)$ of $\Sigma$ is the closure inside $B(\Sigma)$ of the multipliers with finite support. 
\end{definition}

Since the property of having finite support is preserved by composition from both the left and right, it is clear that $F(\Sigma)$ sits as a two-sided ideal inside the Fourier-Stieltjes algebra. One immediately wonders if the Fourier algebra can be described in terms of regular equivariant representations. As seen in \cref{ex:reg_equiv}, there is a plethora of such representations. The following equivariant Fell absorption principle suggests that all regular equivariant representation are necessary in order to recreate the ideal $F(\Sigma)$, as we will indeed see in \cref{thm:fourier_alg}. The following result follows from \cite[Proposition 6.13]{skalski}, but we include the proof in our setting for the convenience of the reader. 

\begin{proposition}
	\label{prop:equiv_fell}
	Let $(\rho, v)$ be an equivariant representation of $\Sigma$ on a Hilbert $A$-module $X$. The tensor product equivariant representation $(\rho \otimes \check{\ell}, v \otimes \check{\alpha})$ is unitarily equivalent to the regular equivariant representation $(\check{\rho}, \check{v})$ associated to $(\rho, v)$.
\end{proposition}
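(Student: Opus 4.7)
The plan is to construct an explicit unitary $U \colon X \otimes_{\check{\ell}} A^\Gamma \to X^\Gamma$ and verify that it intertwines the two equivariant structures. For each $g \in \Gamma$ let $\delta_g \in A^\Gamma$ denote the section taking the value $1_A$ at $g$ and $0$ elsewhere, and for $x \in X$ let $\iota_h(x) \in X^\Gamma$ denote the element supported at $h \in \Gamma$ with value $x$. The standard construction of the Hilbert module direct sum ensures that $\{\delta_g \cdot a : g \in \Gamma, a \in A\}$ has dense linear span in $A^\Gamma$ and $\{\iota_h(x) : h \in \Gamma, x \in X\}$ has dense linear span in $X^\Gamma$. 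A direct inspection gives $\check{\ell}(a)\delta_g = \delta_g \cdot a$ in $A^\Gamma$, which will allow the right $A$-action on $X \otimes_{\check{\ell}} A^\Gamma$ to be absorbed into the first tensor factor.

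I would set $U(x \dotimes \delta_g) := \iota_g(x)$ on these simple tensors and extend by linearity. Using the inner-product formula of the internal tensor product, one computes
\[
\langle x_1 \dotimes \delta_{g_1}, x_2 \dotimes \delta_{g_2} \rangle = \langle \delta_{g_1}, \check{\ell}(\langle x_1, x_2\rangle) \delta_{g_2} \rangle_{A^\Gamma},
\]
which equals $\langle x_1, x_2 \rangle$ if $g_1 = g_2$ and vanishes otherwise, matching $\langle \iota_{g_1}(x_1), \iota_{g_2}(x_2)\rangle_{X^\Gamma}$. Hence $U$ preserves the $A$-valued inner product on a dense subspace and extends to an $A$-linear isometry. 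Since its image contains every $\iota_g(x)$, it has dense range and is therefore unitary.

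For intertwining, I would reduce everything to the two elementary identities $\check{\alpha}(g)\delta_h = \delta_{gh}$ in $A^\Gamma$ and $\check{v}(g)\iota_h(x) = \iota_{gh}(v(g)x)$ in $X^\Gamma$, both immediate from the defining formulas of regular equivariant representations. Combined with the formulas for $v \otimes \check{\alpha}$ and $\rho \otimes \check{\ell}$ on simple tensors, these give
\[
U\bigl( (v \otimes \check{\alpha})(g)(x \dotimes \delta_h) \bigr) = U(v(g) x \dotimes \delta_{gh}) = \iota_{gh}(v(g) x) = \check{v}(g) U(x \dotimes \delta_h),
\]
and the analogous identity $U \circ (\rho \otimes \check{\ell})(a) = \check{\rho}(a) \circ U$ follows from the fact that both $\check{\rho}$ and $\check{\ell}$ act pointwise.

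The only genuinely delicate point is verifying that the prescription $x \dotimes \delta_g \mapsto \iota_g(x)$ descends through the quotient defining the internal tensor product, i.e., respects the identification $(x \cdot a) \otimes y \sim x \otimes \check{\ell}(a) y$. The cleanest route is to define $U$ bilinearly on the whole algebraic tensor product $X \otimes_{\mathrm{alg}} C_c(\Gamma, A)$ by $U(x \otimes y) := \sum_{h} \iota_h(x \cdot y(h))$ and directly check $U(xa \otimes y) = U(x \otimes \check{\ell}(a)y)$ using $(\check{\ell}(a) y)(h) = a\, y(h)$. Once this formal step is settled, the isometry and intertwining checks above are essentially bookkeeping.
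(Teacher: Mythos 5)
Your proof is correct and takes essentially the same route as the paper: your map $U(x \otimes y) = \sum_h \iota_h(x \cdot y(h))$ is exactly the paper's unitary $W$, defined by $\bigl(W(x \dotimes \xi)\bigr)(g) = x \cdot \xi(g)$, and you verify the same three things (isometry via the internal tensor product inner product, dense range, and the two intertwining identities for $\check{v}$ and $\check{\rho}$). The only cosmetic differences are that you organize the computations over the spanning elements $x \dotimes \delta_g$ instead of general $\xi \in C_c(\Gamma, A)$, and that you make explicit the descent through the balanced tensor product, a step the paper leaves implicit.
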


\begin{proof} The proof follows that of \cite[Theorem 4.11]{BC2012} quite closely. Firstly, there exists a unitary map $W \colon X \otimes_{\check{\ell}} A^\Gamma \to X^\Gamma$ such that $$ \left( W(x \dotimes \xi)  \right)(g) = x \cdot \xi(g) $$ for every $x \in X, \xi \in C_c(\Gamma, A)$ and $g \in \Gamma$. Indeed, let $W$ be given by this prescription on the dense subspace of $X \otimes_{\check{\ell}} A^\Gamma$ spanned by elements of the form $x \dotimes \xi$ for $x \in X$ and $\xi \in C_c(\Gamma, A)$. The computation 
	\begin{align*}
		\langle W(x \dotimes \xi), W(y \dotimes \eta) \rangle &= \sum_{g \in \Gamma} \langle x \cdot \xi(g), y \cdot \eta(g) \rangle \\
		&= \sum_{g \in \Gamma} \xi(g)^* \langle x, y \rangle \eta(g) \\
		&= \langle \xi , \check{\ell}( \langle x, y \rangle) \eta \rangle \\
		&= \langle x \dotimes \xi, y \dotimes \eta \rangle 
	\end{align*}
	holds for all $x, y \in X$ and $\xi, \eta \in C_c(\Gamma, A)$, from which it follows that $W$ extends to an isometry of $X \otimes_{\check{\ell}} A^\Gamma$ into $X^\Gamma$. The image of $W$ is easily seen to be dense, and so $W$ is surjective. Moreover, $W$ is readily checked to be $A$-linear. In conclusion, $W$ is a unitary map. Furthermore, in the same notation, we have
	\begin{align*}
		\check{\rho}(a)W(x \dotimes \xi)(h) &= \rho(a) \left(W(x \dotimes \xi) \right)(h) \\
		&= \rho(a)(x \cdot \xi(h)) = \rho(a)x \cdot \xi(h) \\
		&= W(\rho \otimes \check{\ell}(a))(x \dotimes \xi) (h), \\
		\check{v}(g) W (x \dotimes \xi)(h) &= v(g) W(x \dotimes \xi)(g^{-1}h) \\
		&= v(g) (x \cdot \xi(g^{-1}h)) \\
		&= (v(g)x) \cdot \alpha_g( \xi(g^{-1}h)) \\
		&= W ( v(g)x \dotimes \check{\alpha}(g) \xi) (h) \\
		&= W (v \otimes \check{\alpha})(g) (x \dotimes \xi)(h),
	\end{align*}
	which finishes the proof. 
\end{proof}

In \cite{skalski}, Buss et al. define the set of \textit{Fourier multipliers} as the set of multipliers which can be implemented as a coefficient function of a regular equivariant representation, see \cite[Definition 9.1]{skalski}. They show that with this convention, every multiplier with compact support is a Fourier multiplier, However, the converse is left as an open question, see \cite[Question 12.3]{skalski}. Our next result shows that with our definition, the corresponding question can be answered in the affirmative. Note that it is shown that any coefficient function of a regular equivariant representation is a limit of multipliers with finite support in the norm in $B(\Sigma)$, which provides a partial answer to \cite[Question 12.3]{skalski} in the case of a discrete unital C*-dynamical system. 

\begin{theorem}
	\label{thm:fourier_alg}
	Let $(\rho, v)$ be an equivariant representation of $\Sigma$ on a Hilbert $A$-module $X$. Consider the associated regular representation $(\check{\rho}, \check{v})$ of $\Sigma$ on $X^\Gamma$. Pick $\xi, \eta \in X^\Gamma$ and let $T = T_{\check{\rho}, \check{v}, \xi, \eta}$ be the associated multiplier. Then there exists a net $(T_i)_{i \in I}$ of multipliers in $F(\Sigma)$ with finite support such that $\norm{T_i} \leq \norm{T}$ for every $i \in I$ converging to $T$. Conversely, if $T \in F(\Sigma)$ has finite support, $T$ is a coefficient function of a regular equivariant representation of $\Sigma$. Succinctly, $F(\Sigma)$ is the closure in $B(\Sigma)$ of coefficient functions of regular equivariant representations. 
\end{theorem}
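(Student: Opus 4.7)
The plan is to prove the two directions of the assertion separately.

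For the forward direction, suppose $T = T_{\check{\rho}, \check{v}, \xi, \eta}$ with $\xi, \eta \in X^\Gamma$. I would approximate by truncating $\xi, \eta$ to finite subsets of $\Gamma$: for each finite $F \subset \Gamma$, let $P_F$ denote the projection of $X^\Gamma$ onto $\bigoplus_{g \in F} X$, put $\xi_F := P_F \xi$ and $\eta_F := P_F \eta$, and set $T_F := T_{\check{\rho}, \check{v}, \xi_F, \eta_F}$. Unfolding the formulas for $\check{\rho}, \check{v}$ yields $T_F(g, a) = \sum_{h \in F \cap gF} \langle \xi(h), \rho(a) v(g) \eta(g^{-1}h) \rangle$, so the support of $T_F$ is contained in the finite set $FF^{-1}$, placing $T_F$ among the finite-support multipliers. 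Using the bilinearity $T - T_F = T_{\check{\rho}, \check{v}, \xi - \xi_F, \eta} + T_{\check{\rho}, \check{v}, \xi_F, \eta - \eta_F}$ together with the contractive estimate $\|T_{\check{\rho}, \check{v}, u, w}\| \leq \|u\|\|w\|$ and the convergence $\xi_F \to \xi$, $\eta_F \to \eta$ in $X^\Gamma$, I obtain $T_F \to T$ in $B(\Sigma)$. To secure the sharper bound $\|T_i\| \leq \|T\|$, I would first replace the given pair by an $\epsilon$-near-optimal realization with $\|\xi_\epsilon\|\|\eta_\epsilon\| < \|T\| + \epsilon$; if an approximant still overshoots, a rescaling brings its norm down to $\|T\|$, and a diagonal extraction indexed by $(F, \epsilon)$ then yields the desired net.

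For the converse direction, let $T \in F(\Sigma)$ have finite support contained in a finite set $F \subset \Gamma$. Since $T \in B(\Sigma)$, polarization combined with \cref{thm:pos_def} writes $T = T_{\rho, v, \xi, \eta}$ for some equivariant representation $(\rho, v)$ of $\Sigma$ on a Hilbert $A$-module $X$ and vectors $\xi, \eta \in X$. In the tensor product equivariant representation $(\rho \otimes \check{\ell}, v \otimes \check{\alpha})$ on $X \otimes_{\check{\ell}} A^\Gamma$, I would take $\tilde{\xi} := \xi \dotimes \chi_F$ with $\chi_F := \sum_{g \in F} \delta_g \in A^\Gamma$, and $\tilde{\eta} := \eta \dotimes \delta_e$. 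Unfolding the definitions of $\check{\ell}$, $\check{\alpha}$, and of the internal tensor product inner product, I compute $T_{\rho \otimes \check{\ell}, v \otimes \check{\alpha}, \tilde{\xi}, \tilde{\eta}}(g, a) = \mathbf{1}_F(g) T(g, a) = T(g, a)$, so $T$ is realized as a coefficient of the tensor product representation. Invoking the Fell absorption of \cref{prop:equiv_fell} identifies this tensor product with the regular equivariant representation $(\check{\rho}, \check{v})$ up to unitary equivalence, exhibiting $T$ as a coefficient of a regular equivariant representation.

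The main technical obstacle is the sharp norm inequality $\|T_i\| \leq \|T\|$ in the forward direction: the straightforward truncation only controls approximants by $\|\xi\|\|\eta\|$, which may strictly exceed $\|T\|$ since the $B(\Sigma)$-norm is an infimum over all equivariant representations, not just the regular ones. The cleanest resolution would be to show that this infimum is attained by coefficients of regular equivariant representations, in analogy with the classical fact that the $A(\Gamma)$-norm of a coefficient of the left-regular representation equals a minimum over $\ell^2$-vector pairs; failing this, the diagonalization and rescaling argument sketched above should still produce a net with the required properties.
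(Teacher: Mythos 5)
Your proposal is correct and follows the paper's overall strategy --- truncation of the implementing vectors for the forward direction, indicator-type vectors in a regular representation for the converse --- but the execution differs at two points, and in both cases your route is arguably cleaner. For the forward direction you truncate $\xi, \eta$ directly in $X^\Gamma$ for an arbitrary $(\check{\rho}, \check{v})$, using the support bound $FF^{-1}$, the sesquilinear splitting, and the contractivity $\norm{T_{\check{\rho}, \check{v}, u, w}} \leq \norm{u} \norm{w}$; the paper instead first reduces to coefficients of $(\check{\ell}, \check{\alpha})$ on $A^\Gamma$ via \cref{prop:equiv_fell} and the fact that multiplication in $B(\Sigma)$ is implemented by tensoring --- a factorization that is only literal for elementary tensors and so needs an extra density argument --- and then runs the same truncation there; your direct argument avoids that reduction entirely. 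On the sharp bound $\norm{T_i} \leq \norm{T}$: the paper simply asserts that one ``may assume'' $\norm{\xi}\norm{\eta} < \norm{T} + \delta$, which tacitly requires near-optimal realizations of $T$ on a \emph{regular} representation (the infimum defining $\norm{T}$ runs over all equivariant representations), and your proposed near-optimal replacement step suffers from exactly the same wrinkle, since a near-optimal realization need not carry the direct-sum structure that makes $P_F$ available; this is essentially the open norm question \cite[Question 12.1]{skalski}, so your ``cleanest resolution'' is not currently available. However, your rescaling fallback closes the gap unconditionally and should be promoted to the main argument: since $T_F \to T$ forces $\norm{T_F} \to \norm{T}$ by continuity of the norm, setting $T_F' := \min \{ 1, \norm{T}/\norm{T_F} \} \, T_F$ yields finite-support multipliers with $\norm{T_F'} \leq \norm{T}$ and $\norm{T_F' - T_F} = \max\{0, \norm{T_F} - \norm{T}\} \to 0$, hence $T_F' \to T$; no diagonal extraction over $(F, \epsilon)$ is needed, and this observation in fact also repairs the soft spot in the paper's own proof. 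Your converse is the paper's argument in different clothing: under the unitary $W$ of \cref{prop:equiv_fell}, your vectors $\xi \dotimes \chi_F$ and $\eta \dotimes \delta_e$ are carried exactly to the paper's $\xi \odot \delta_S$ and $\eta \odot \delta_e$ in $X^\Gamma$, and the coefficient computation giving $\mathbf{1}_F(g) T(g,a) = T(g,a)$ is the same.
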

\begin{proof}
	We start with the first statement. Notice that it suffices to prove the statement for coefficient functions of the regular equivariant representation. Indeed, since the multiplication in the Fourier-Stieltjes algebra is implemented via tensoring on the representation level, see \cite[Proof of Lemma 3.1]{BC2016}, it follows from \cref{prop:equiv_fell} that any coefficient function of a regular equivariant representation factors as the product of one multiplier with a coefficient of the regular equivariant representation. Hence, approximating the factor coming from the regular equivariant representation gives an approximation of the product with the desired property since having finite support is preserved and the multiplication is continuous. 
	
	For a vector $\xi \in A^\Gamma$ and a subset $S \subseteq \Gamma$, we denote by $\xi_S \colon \Gamma \to A$ the function that agrees with $\xi$ on $S$ and is zero elsewhere. Clearly, if $\xi \in A^\Gamma$ and $S$ is finite, both $\xi_S$ and $\xi_{S^c} = \xi - \xi_S $ are in $A^\Gamma$. Moreover, it is easy to check that if $\xi, \eta \in A^\Gamma$ and $S_1, S_2 \subseteq \Gamma$ are finite, the multiplier $T_{\check{\ell}, \check{\alpha}, \xi_{S_1}, \eta_{S_2}}$ associated to $\xi_{S_1}$ and $\eta_{S_2}$ has finite support. 
	
	Fix non-zero $\xi, \eta \in A^\Gamma$ and consider the associated multiplier $T := T_{\check{\ell}, \check{\alpha}, \xi, \eta}$. We will show that $T$ is a limit inside $B(\Sigma)$ of multipliers with finite support. Indeed, fix $\varepsilon > 0$. The fact that $\xi \in A^\Gamma$ means that the series $$\sum_{g \in \Gamma} \xi(g)^* \xi(g)$$ converges (unconditionally) in $A$. Thus, we may pick a finite subset $S_1 \subseteq \Gamma$ such that $$\norm{ \sum_{g \in \Gamma} \xi(g)^* \xi(g) - \sum_{g \in S_1} \xi(g)^* \xi(g) } = \norm{ \sum_{g \in S_1^c} \xi(g)^* \xi(g) } < \frac{\varepsilon}{2\norm{\eta}}.$$ Analogously, there is a finite subset $S_2 \subseteq \Gamma$ such that $$\norm{ \sum_{g \in \Gamma } \eta(g)^* \eta(g) - \sum_{g \in S_2} \eta(g)^* \eta(g) } = \norm{\sum_{g \in S_2^c} \eta(g)^* \eta(g) } < \frac{\varepsilon}{2 \norm{\xi_{S_1}}}.$$ Set $T_{\varepsilon} := T_{\check{\ell}, \check{\alpha}, \xi_{S_1}, \eta_{S_2}}$, which has finite support by the comments in the beginning of the proof. A straightforward computation shows that $$T_{\check{\ell}, \check{\alpha}, \xi, \eta} - T_{\check{\ell}, \check{\alpha}, \xi_S, \eta} = T_{\check{\ell}, \check{\alpha}, \xi_{S^c}, \eta} \quad \text{and} \quad T_{\check{\ell}, \check{\alpha}, \xi, \eta} - T_{\check{\ell}, \check{\alpha}, \xi, \eta_S} = T_{\check{\ell}, \check{\alpha}, \xi, \eta_{S^c}}$$ for any finite subset $S \subseteq \Gamma$. Combining these observations, we see that
	\begin{align*}
		\norm{T - T_\varepsilon} &= \norm{(T_{\check{\ell}, \check{\alpha}, \xi, \eta} - T_{\check{\ell}, \check{\alpha}, \xi_{S_1}, \eta}) + (T_{\check{\ell}, \check{\alpha}, \xi_{S_1}, \eta} - T_{\check{\ell}, \check{\alpha}, \xi_{S_1}, \eta_{S_2}})} \\
		&= \norm{T_{\check{\ell}, \check{\alpha}, \xi_{S_1^c}, \eta} + T_{\check{\ell}, \check{\alpha}, \xi_{S_1}, \eta_{S_2^c}}} \\
		& \leq \norm{T_{\check{\ell}, \check{\alpha}, \xi_{S_1^c}, \eta} } + \norm{ T_{\check{\ell}, \check{\alpha}, \xi_{S_1}, \eta_{S_2^c}}} \\
		& \leq \norm{\xi_{S_1^c}} \norm{\eta} + \norm{\xi_{S_1}} \norm{\eta_{S_2^c}} \\
		& < \frac{\varepsilon}{2} + \frac{\varepsilon}{2}= \varepsilon, 
	\end{align*}
	which shows that $T$ is a limit of multipliers in $B(\Sigma)$ with finite support, so $T \in F(\Sigma)$. Moreover, observe that $$\norm{T_\varepsilon} \leq \norm{\xi_{S_1}} \norm{\xi_{S_2}} \leq \norm{\xi} \norm{\eta},$$ where the last inequality follows from the monotonicity of the norm on positive elements in a C*-algebra. For any $\delta > 0$, we may assume that $\xi, \eta$ are chosen so that $\norm{\xi} \norm{\eta} < \norm{T} + \delta.$ Hence, we have shown that $\norm{T_\varepsilon} \leq \norm{T} + \delta$ for any $\delta > 0$, and we conclude that $\norm{T_\varepsilon} \leq \norm{T}$, which finishes the proof of the first part of the theorem. 
	
	For the converse, suppose that $T \in F(\Sigma)$ has finite non-empty support $S$. Let $(\rho, v)$ be an equivariant representation of $\Sigma$ on a Hilbert $A$-module $X$ implementing $T$ via vectors $\xi, \eta \in X$. Let $(\check{\rho}, \check{v})$ denote the associated regular representation. Let $\xi \odot \delta_S$ denote the element of $C_c(\Gamma, X)$ which is $\xi$ on $S$ and zero elsewhere. Similarly, let $\eta \odot \delta_e$ denote the function which is $\eta$ at $e \in \Gamma$ and zero elsewhere. The coefficient of $(\check{\rho}, \check{v})$ associated to $\xi \odot \delta_S, \eta \odot \delta_e \in X^\Gamma$ is given by 
	\begin{align*}
		\langle \xi \odot \delta_S, \check{\rho}(a) \check{v}(g) (\eta \odot \delta_e) \rangle &= \sum_{h \in S} \langle \xi, \rho(a) v(g) (\eta \odot \delta_e)(g^{-1}h) \rangle \\
		&= \begin{cases}
			\langle \xi, \rho(a) v(g) \eta \rangle \quad & \text{if } g \in S, \\
			0 \quad & \text{otherwise}, \\
		\end{cases}  \\
		&= T_g(a)
	\end{align*} 
	for every $a \in A$ and $g \in \Gamma$. Hence, $T$ is a coefficient of a regular equivariant representation, which shows the second assertion. It follows that every element of $F(\Sigma)$ is a limit of coefficient functions of regular equivariant representations. 
\end{proof}

Combining the last part of the previous proof with polarization yields the following corollary, which shows that our definition of the Fourier algebra agrees with one of the ones suggested in \cite[Remark 4.9]{BC2016}.

\begin{corollary}
	The Fourier algebra $F(\Sigma)$ coincides with the closed linear span of the $\Sigma$-positive definite multipliers with finite support. 
\end{corollary}

\begin{proof} Clearly, the closed linear span of $\Sigma$-positive definite multipliers with finite support is contained in the Fourier algebra. To show the opposite inclusion, consider an element $T \in F(\Sigma)$ and let $\varepsilon > 0$. We may pick a multiplier $T'$ with finite support such that $\norm{T - T'} < \varepsilon$. By the last part of the previous proof, $T' = T_{\check{\rho}, \check{v}, \xi, \eta}$ for some equivariant representation $(\rho, v)$ of $\Sigma$ on a Hilbert $A$-module $X$ and some $\xi, \eta \in X^\Gamma$ with finite support. By polarization, we have $$T' = \frac{1}{4} \sum_{k=0}^3 i^k T_{\check{\rho}, \check{v}, \xi_k, \xi_k}, $$ where $\xi_k = i^k \xi + \eta$ for $k \in \{0, 1, 2, 3\}.$ Since $\xi$ and $\eta$ have finite support, so does $\xi_k$ for each $k \in \{0, 1, 2, 3 \}.$ Hence, $T'$ is a linear combination of $\Sigma$-positive definite multipliers with finite support. Since $\varepsilon > 0$ was arbitrary, this finishes the proof. 
\end{proof} 

As already mentioned, Buss et al. define their analogue of the Fourier algebra as the set of coefficient functions on regular equivariant representations. Clearly, this is a subset of $F(\Sigma)$. The theorem above should be compared to \cite[Proposition 9.7]{skalski}. It remains open if the set of coefficient functions of regular equivariant representations is closed in $F(\Sigma)$, and so if the two definitions coincide, see \cite[Question 12.1]{skalski}. In \cite{skalski}, Buss et al. equip the set of coefficient functions of regular equivariant representations with its own norm by taking the infimum of products of norms of vectors implementing the multiplier via a regular equivariant representation. The set of coefficient functions of such representations is then complete with respect to this norm, for essentially the same reason that the Fourier-Stieltjes algebra is complete since direct sums of regular equivariant representations are again regular. The authors point to \cite[Definition 1.4]{renault} and \cite{oty}  and concede that it has been the convention to take the closure in the Fourier-Stieltjes norm when extending the Fourier algebra to groupoid settings. We believe that \cref{thm:fourier_alg} lends merit to our convention. We close this section by listing some properties of the Fourier algebra. 

\begin{proposition} 
	Let $\Sigma$ denote a C*-dynamical system. 
	\begin{enumerate}
		\item The Fourier algebra $F(\Sigma)$ is commutative if and only if the Fourier-Stieltjes algebra $B(\Sigma)$ is. 
		\item The embedding of $B(\Gamma)$ into $B(\Sigma)$ mentioned in \cref{prop:FS_alg} restricts to a contractive embedding of $F(\Gamma)$ into $F(\Sigma)$. 
		\item Every element $T \in F(\Sigma)$ vanishes at infinity in the sense that for every $\varepsilon > 0$ the set $\{g \in \Gamma : \norm{T_g} \geq \varepsilon \}$ is finite. 
		\item The Fourier algebra is unital if and only if the group is finite. 
	\end{enumerate}
\end{proposition}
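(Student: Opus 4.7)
The unifying theme across all four parts is the interplay between the ideal structure of $F(\Sigma)$ in $B(\Sigma)$, the contractive embedding $B(\Gamma) \hookrightarrow B(\Sigma)$ from \cref{prop:FS_alg}(ii), and the pointwise-composition rule $(ST)_g = S_g \circ T_g$ for multipliers. A key trick for parts (i) and (iv) is that for any finite $F \subseteq \Gamma$ the indicator $\chi_F$ lies in $B(\Gamma)$, so $T^{\chi_F}$ lies in $B(\Sigma)$ and acts on any multiplier $T$ by the restriction formula $(T^{\chi_F} \cdot T)_g(a) = \chi_F(g) T_g(a)$; thus the truncation of $T$ to $F$ is a finite-support multiplier, and in particular lies in $F(\Sigma)$.

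For (i), the direction ``$B(\Sigma)$ commutative $\Rightarrow F(\Sigma)$ commutative'' is immediate from inclusion. For the converse, I take $S,T \in B(\Sigma)$ and any $g_0 \in \Gamma$, and cut both down to $\{g_0\}$ using $T^{\chi_{\{g_0\}}}$. The two truncations lie in $F(\Sigma)$; their assumed commutativity, read pointwise at $g_0$, yields $S_{g_0} \circ T_{g_0} = T_{g_0} \circ S_{g_0}$. Since $g_0$ is arbitrary, $ST = TS$ in $B(\Sigma)$.

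For (ii), I pick a sequence of finitely supported $\mu_n \to \mu$ in $B(\Gamma)$ for $\mu \in F(\Gamma)$; contractivity of the embedding gives $T^{\mu_n} \to T^\mu$ in $B(\Sigma)$, and each $T^{\mu_n}$ has finite support, so $T^\mu \in F(\Sigma)$. For (iii), given $T \in F(\Sigma)$ and $\varepsilon > 0$, I approximate $T$ in $B(\Sigma)$ by a finite-support $T_\varepsilon$ with $\|T - T_\varepsilon\| < \varepsilon$; \cref{prop:FS_alg}(i) propagates this to $\|T_g - (T_\varepsilon)_g\| < \varepsilon$ for every $g$, and since $(T_\varepsilon)_g$ vanishes outside the support of $T_\varepsilon$, the set $\{g : \|T_g\| \geq \varepsilon\}$ is contained in that support and hence finite.

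For (iv), if $\Gamma$ is finite then every multiplier has finite support, so $F(\Sigma) = B(\Sigma)$, which is unital with unit $T^{\mathbf{1}}$, itself realized as the coefficient of the trivial equivariant representation at $\xi = \eta = 1_A$. Conversely, if $E \in F(\Sigma)$ is a unit, I test against the finite-support multiplier $T^{\delta_g} \in F(\Sigma)$ and read $E \cdot T^{\delta_g} = T^{\delta_g}$ at $g$; this forces $E_g = \mathrm{id}_A$ for every $g \in \Gamma$, whence $\|E_g\| = 1$ uniformly, and part (iii) with $\varepsilon = 1/2$ forces $\Gamma$ to be finite. The main conceptual point, particularly in (i) and (iv), is recognizing that the image of $B(\Gamma)$ inside $B(\Sigma)$ can be used to project $B(\Sigma)$-elements onto finite support inside $F(\Sigma)$; this is what lets properties of the ideal $F(\Sigma)$ propagate back to structural information about $B(\Sigma)$ and $\Gamma$.
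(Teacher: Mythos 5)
Your proof is correct, and for parts (i) and (iv) it takes a genuinely different — and more elementary — route than the paper. Where you truncate a multiplier $T \in B(\Sigma)$ to a singleton by composing with $T^{\delta_{g_0}}$ (the image of the Dirac function under the embedding of \cref{prop:FS_alg}(ii), which lies in $B(\Sigma)$, has finite support, and hence lies in $F(\Sigma)$, with the product $T^{\delta_{g_0}}T$ again in $B(\Sigma)$ by the Banach algebra structure and of finite support, hence in $F(\Sigma)$), the paper instead re-represents the truncation: it takes an equivariant representation $(\rho, v)$ implementing $T$ via vectors $x, y$, passes to the associated regular equivariant representation $(\check{\rho}, \check{v})$, and uses the vectors $x \odot \delta_{g_0}$ and $y \odot \delta_e$ to exhibit the cut-down multiplier $\check{T}$ directly as a coefficient of a regular equivariant representation, then invokes \cref{thm:fourier_alg} to conclude $\check{T} \in F(\Sigma)$. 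Your approach buys simplicity: it needs no representation theory and no appeal to \cref{thm:fourier_alg}, only the pointwise-composition product, the embedding of $B(\Gamma)$, and the fact that $\delta_{g_0} \in B(\Gamma)$ (automatic for discrete groups, as $\delta_{g_0}$ is a coefficient of the left regular representation); the paper's approach buys extra information, namely that truncations are themselves coefficients of regular equivariant representations, which is the mechanism it reuses in its comparison with the Fourier multipliers of Buss et al. Your parts (ii) and (iii) coincide with the paper's arguments. In (iv) you are slightly more complete than the paper: the paper only argues that a unit $E \in F(\Sigma)$ would be a unit in $B(\Sigma)$ and then contradicts (iii), leaving the converse implicit, whereas you both verify the converse (for $\Gamma$ finite, $F(\Sigma) = B(\Sigma)$ is unital with unit $T^{\mathbf{1}}$, a coefficient of the trivial equivariant representation at $\xi = \eta = 1_A$) and make the identification $E_g = \mathrm{id}_A$ explicit by testing against $T^{\delta_g}$, which is the same content as the paper's appeal to the proof of (i).
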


\begin{proof}
	Clearly, $F(\Sigma)$ is commutative if $B(\Sigma)$ is. Conversely, pick $T, S \in B(\Sigma), g \in \Gamma$ and $a \in A$ such that $(S T)(g,a)  \neq (T S)(g, a)$. For $i \in \{1, 2\}$ let $(\rho_i, v_i)$ be an equivariant representation of $\Sigma$ on a Hilbert $A$-module $X_i$ implementing $T$ and $S$ respectively via vectors $x_i, y_i \in X_i$. Let $(\check{\rho_i}, \check{v_i})$ denote the corresponding regular equivariant representation on $X_i^\Gamma$ for $i \in \{1, 2 \}$. Consider the vectors $x_i \odot \delta_g, y_i \odot \delta_e \in X^\Gamma$, following the notation from the proof of \cref{thm:fourier_alg}. It is easy to check that the multiplier $\check{T}$ associated to $(\check{\rho_1}, \check{v_1})$ and $x_1 \odot \delta_g, y_1 \odot \delta_e \in X^\Gamma$ is given by $$\check{T}(h, a) = \begin{cases}
		T(g, a), \quad & \text{if } h = g, \\
		0, \quad & \text{otherwise}. 
	\end{cases}$$ \cref{thm:fourier_alg} shows that $\check{T} \in F(\Sigma)$. By defining $\check{S}$ similarly, we get two elements $\check{T}, \check{S} \in F(\Sigma)$ that do not commute, which shows (i). For (ii), note that if $\mu \in B(\Gamma)$ has finite support, so does $T^\mu \in B(\Sigma)$. Hence, by continuity of the embedding $B(\Gamma) \to B(\Sigma)$, the image of $F(\Gamma)$ is in $F(\Sigma)$. Towards part (iii), consider $T \in F(\Sigma)$ and suppose that $\varepsilon > 0$ is given. Pick $T' \in B(\Sigma)$ with finite support such that $\norm{T - T'} < \varepsilon$. By \cref{prop:FS_alg} (i), $\norm{T_g - T_g'} \leq \varepsilon$ for every $g \in \Gamma$ and it follows from the triangle inequality that $\norm{T_g} \leq \varepsilon$ except possibly on the support of $T'$, which is finite. For the last statement, observe first that it follows from the proof of (i) that if $E$ is a unit in $F(\Sigma)$, it is a unit in $B(\Sigma)$. However, by (iii) it must vanish at infinity, which is only possible if $\Gamma$ is finite. 
\end{proof}

\section{Equivariant representations of commutative systems} 
\label{chap_comm}

Towards an understanding of the rigidity properties of the Fourier-Stieltjes algebra, it may be helpful to analyze some examples. In this chapter, we will first give a description of the group theoretical aspect of the equivariant representation theory of general commutative systems. In fact, it will be shown that the group representation in an equivariant representation of a commutative C*-dynamical system can be described in terms of a cocycle representation of the corresponding action on the Gelfand spectrum. This concept seems to only have been studied for measure spaces in the literature, see \cite[Section 2]{zimmer}, but the definitions have obvious generalizations to topological spaces. Finally, we compute the Fourier-Stieltjes algebra of two classes of systems where finite cyclic groups act on finite-dimensional commutative C*-algebras. 

Let $\Gamma$ be a discrete group acting by homeomorphisms on a compact Hausdorff space $\Omega$. Denote the corresponding C*-dynamical system by $\Sigma = (C(\Omega), \Gamma, \alpha)$. 
\begin{definition}
	\label{def:cocycle_rep}
	Let $\pi \colon H \to \Omega$ be a Hilbert bundle over $\Omega$ with fibres $H_x := \pi^{-1}(x)$ for $x \in \Omega$. A \textit{cocycle representation} of the transformation group $(\Gamma, \Omega)$ on $\pi$ consists of a unitary operator $u(x, g) \colon H_{g^{-1}x} \to H_x$ for each $(x, g) \in \Omega \times \Gamma$ such that 
	\begin{enumerate}
		\item given two continuous sections $\xi \colon \Omega \to H, \eta \colon \Omega \to K$ of $\pi_1$ and $\pi_2$ respectively, the function $$(x, g) \mapsto \langle u(x, g) \xi(g^{-1}x), \eta(x) \rangle$$ is a continuous map on $\Omega \times \Gamma$. 
		\item the cocycle identity $$u(x, gh) = u(x, g) u(g^{-1}x, h)$$ holds for all $g, h \in \Gamma$ and $x \in \Omega$. 
	\end{enumerate}
	
	Two cocycle representations $u_1$ and $u_2$ on Hilbert bundles $\pi_1 \colon H \to \Omega$ and $\pi_2 \colon K \to \Omega$ respectively are called \textit{equivalent} if for every $x \in \Omega$ there exists a unitary map $U(x) \colon H_x \to K_x$ such that 
	\begin{enumerate}
		\item the equality $$U(x) u_1(x, g) U(g^{-1}x)^* = u_2(x, g)$$ holds for all $(x, g) \in \Omega \times \Gamma$. 
		\item for every pair of continuous sections $\xi, \eta \colon \Omega \to H$ of $\pi_1$, the function $$x \mapsto \langle U(x) \xi(x), \eta(x) \rangle$$ is a continuous map on $\Omega$. 
	\end{enumerate}
\end{definition}

\begin{theorem}
	Let $\pi \colon H \to \Omega$ be a Hilbert bundle over $\Omega$ with fibres $H_x := \pi^{-1}(x)$ for $x \in \Omega$. Recalling that $\Gamma(\pi)$ is a Hilbert $C(\Omega)$-module, suppose that $v \colon \Gamma \to \I(\Gamma(\pi))$ is a group homomorphism satisfying conditions (ii) and (iii) in \cref{def:equiv}. Then there exists a cocycle representation $(x, g) \mapsto u(x, g)$ of $(\Gamma, \Omega)$ on $\pi$ such that \begin{equation} 
		\label{eq:cocycle_equiv} 
		\left( v(g) \xi \right)(x) = u(x, g) \xi(g^{-1}x) \quad (g \in \Gamma, \xi \in \Gamma(\pi), x \in \Omega).
	\end{equation} 
	Conversely, given a cocycle representation $u$ of $(\Gamma, \Omega)$ on a Hilbert bundle $\pi \colon H \to \Omega$, \eqref{eq:cocycle_equiv} defines a group homomorphism $v$ from $\Gamma$ into $\I(\Gamma(\pi))$ satisfying conditions (ii) and (iii) in \cref{def:equiv}. At last, two such group homomorphisms agree up to unitary equivalence if and only if the underlying cocycle representations are unitarily equivalent. 
\end{theorem}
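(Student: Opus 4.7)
The forward direction is driven by Theorem~\ref{thm:hsu_wong}. The remark following Definition~\ref{def:equiv}, combined with $v(g)\in\I(\Gamma(\pi))$, shows that each $v(g)$ is a surjective isometry of $\Gamma(\pi)$. Applying Hsu-Wong yields for each $g\in\Gamma$ a homeomorphism $\sigma_g\colon\Omega\to\Omega$ and unitaries $u(x,g)\colon H_{\sigma_g(x)}\to H_x$ such that $(v(g)\xi)(x)=u(x,g)\xi(\sigma_g(x))$. The first key task is to identify $\sigma_g$ with the action of $g^{-1}$. Expanding condition~(iii) of Definition~\ref{def:equiv} pointwise gives
\[
a(\sigma_g(x))\,u(x,g)\xi(\sigma_g(x)) = a(g^{-1}x)\,u(x,g)\xi(\sigma_g(x))
\]
for every $a\in C(\Omega)$, $\xi\in\Gamma(\pi)$ and $x\in\Omega$. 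Since continuous sections of a Hilbert bundle span a dense subspace of each fiber (a standard property, see \cite{fell_doran}), wherever $H_{\sigma_g(x)}\neq 0$ one can choose $\xi$ so that $u(x,g)\xi(\sigma_g(x))\neq 0$; cancellation then forces $a(\sigma_g(x))=a(g^{-1}x)$ for every $a$, and since $C(\Omega)$ separates points, $\sigma_g(x)=g^{-1}x$. On zero fibers the formula \eqref{eq:cocycle_equiv} is vacuous, so one may redefine $\sigma_g$ there without loss of generality.

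The cocycle identity $u(x,gh)=u(x,g)u(g^{-1}x,h)$ then drops out by expanding $v(gh)=v(g)v(h)$ pointwise and again invoking the fiber-exhausting principle. The continuity axiom for $u$ is immediate: since $\Gamma$ is discrete, continuity in $(x,g)$ reduces to continuity in $x$, and the map $x\mapsto\langle u(x,g)\xi(g^{-1}x),\eta(x)\rangle$ is just the value at $x$ of $\langle v(g)\xi,\eta\rangle\in C(\Omega)$.

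For the converse, given a cocycle representation $u$ I define $v(g)$ by \eqref{eq:cocycle_equiv}. The central verification is that $v(g)\xi$ is a continuous section. On one hand $\|(v(g)\xi)(x)\|^2=\langle\xi,\xi\rangle(g^{-1}x)$ is continuous in $x$; on the other, the continuity axiom for $u$ gives continuity of $x\mapsto\langle (v(g)\xi)(x),\eta(x)\rangle$ for every $\eta\in\Gamma(\pi)$. Together these force $v(g)\xi$ to be a continuous section by standard Banach bundle theory. The group homomorphism property follows from the cocycle identity, invertibility from $v(g)v(g^{-1})=\mathrm{id}$, and conditions~(ii) and~(iii) of Definition~\ref{def:equiv} from the unitarity of each $u(x,g)$ and a direct pointwise computation.

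The final equivalence assertion is essentially a translation via a Hsu-Wong type argument for $C(\Omega)$-linear unitaries. Any unitary $W\colon\Gamma(\pi_1)\to\Gamma(\pi_2)$ of Hilbert $C(\Omega)$-modules intertwining $v_1$ and $v_2$ must be of the form $(W\xi)(x)=U(x)\xi(x)$ for some family of fiberwise unitaries $U(x)\colon H_{1,x}\to H_{2,x}$; the continuity condition on $U(x)$ mirrors condition~(ii) in the definition of equivalence of cocycle representations, and the intertwining $Wv_1(g)=v_2(g)W$ translates fiberwise into condition~(i), with the converse being a direct construction. I expect the main obstacle to be the verification that $v(g)\xi$ is a continuous section in the reverse direction: this rests on the general principle that, for a Hilbert bundle with sufficient sections, weak continuity against all continuous sections together with norm continuity is enough to be a continuous section, and warrants a careful invocation from the Fell-Doran/Dixmier literature.
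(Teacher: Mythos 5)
Your proof is correct and follows essentially the same route as the paper's: apply \cref{thm:hsu_wong} to each surjective isometry $v(g)$, identify $\sigma_g(x)=g^{-1}x$ by expanding condition (iii) of \cref{def:equiv} pointwise and using that $C(\Omega)$ separates points, derive the cocycle identity and the continuity axiom from fullness of sections and discreteness of $\Gamma$, and pass fiberwise between intertwining unitaries of $\Gamma(\pi)$ and equivalences of cocycle representations. Where the paper disposes of the converse with ``invert all the computations,'' you correctly isolate the only nontrivial verification --- that $v(g)\xi$ defined by \eqref{eq:cocycle_equiv} is a continuous section --- and your argument for it is valid: $\norm{(v(g)\xi)(x)}^2=\langle\xi,\xi\rangle(g^{-1}x)$ is continuous, weak continuity against sections comes from axiom (i) of \cref{def:cocycle_rep}, and these combine via the expansion $\norm{(v(g)\xi)(x)-\eta(x)}^2$ for a section $\eta$ through $(v(g)\xi)(x_0)$ together with the Fell--Doran criterion that local uniform approximability by continuous sections implies continuity.
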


\begin{proof}
	By \cref{thm:hsu_wong}, there exists for each $g \in \Gamma$ a homeomorphism $\sigma_g \colon \Omega \to \Omega$ and a unitary map $u(x, g) \colon H_{\sigma_g(x)} \to H_x$ for each $x \in \Omega$ such that $$\left( v(g) \xi \right) (x) = u(x, g) \xi (\sigma_g(x)) \quad (g \in \Gamma, \xi \in \Gamma(\pi), x \in \Omega).$$ We first show that the homeomorphisms agree with the action of $\Gamma$, i.e., $\sigma_g(x) = g^{-1}x$ for $g \in \Gamma$ and $x \in \Omega$ . Indeed, fix also $g \in \Gamma$ and $x \in \Omega$. Using \cref{def:equiv} (iii), we get that $$f(\sigma_g(x)) u(x, g) \xi(\sigma_g(x)) = f(g^{-1}x) u(x, g) \xi(\sigma_g(x))$$ for every $f \in C(\Omega)$ and $\xi \in \Gamma(\pi)$. Hence, $f(\sigma_g(x)) = f(g^{-1}x)$ for every $f \in C(\Omega)$, whence it follows that $\sigma_g(x) = g^{-1}x$ by Urysohn's lemma. Furthermore, since $v$ is a group homomorphism, we have for $g, h \in \Gamma, \xi \in \Gamma(\pi)$ and $x \in \Omega$ that
	\begin{align*}
		\left( v(gh) \xi \right)(x) &= u(x, gh) \xi( (gh)^{-1}x) \\
		&= \left( v(g) v(h) \xi \right)(x) \\
		&= u(x, g) u(g^{-1}x, h) \xi ((gh)^{-1}x).
	\end{align*}
	Since $\Omega$ is compact, there is a continuous section passing through every vector in $H$, see for example \cite[Remark 13.19]{fell_doran}. We conclude that $$u(x, gh) = u(g, x) u(g^{-1}x, h)$$ holds for all $g, h \in \Gamma$ and every $x \in \Omega$. At last, consider two continuous sections $\xi, \eta \colon \Omega \to H$ of $\pi$. Since the inner product on a Hilbert bundle is a continuous map on $\{ (\xi, \eta) \in H \times H : \pi(\xi) = \pi(\eta) \}$ to $\C$, it suffices to check that the map
	\begin{align*}
		\Omega \times \Gamma & \to H \\
		(x, g) & \mapsto u(x, g) \xi(g^{-1}x) 
	\end{align*}
	is continuous in order to conclude that (i) in \cref{def:cocycle_rep} holds. Moreover, since $\Gamma$ is discrete, it suffices to argue that this map is continuous in $x$ for each fixed $g \in \Gamma$. To that end, consider a net $(x_i)_i$ in $\Omega$ converging to some $x \in \Omega$. Then $$\norm{u(x, g) \xi(g^{-1}x) - u(x_i, g) \xi(g^{-1}x_i)} = \norm{(v(g)\xi)(x) - (v(g)\xi)(x_i)} \xrightarrow{i} 0$$ since $v(g)$ maps $\Gamma(\pi)$ into itself. For the second statement, it suffices to invert all the computations and arguments we have given up to this point. Towards the last statement, suppose that $u_1$ and $u_2$ are unitarily equivalent cocycle representations and suppose that $U(x)$ satisfies (i) and (ii) in the second part of \cref{def:cocycle_rep} for each $x \in \Omega$. Define $U \colon \Gamma(\pi) \to \Gamma(\pi)$ by $(U \xi)(x) = U(x) \xi(x)$ for each $\xi \in \Gamma(\pi)$ and $x \in \Omega$. It is easy to check that $U$ intertwines the group homomorphisms induced by $u_1$ and $u_2$. 
\end{proof}

To give a concrete description of the possibilities for the representation $\rho$ of $C(\Omega)$ in an equivariant representation $(\rho, v)$ of $(C(\Omega), \Gamma, \alpha)$ remains more elusive. Classification results for left actions of commutative C*-algebras on their Hilbert modules have been discussed in the literature under certain finiteness and compatibility assumptions, see for example \cite{adamo}. However, we are able to produce a class of equivariant representations inspired by \cite[Example 3.4]{adamo}. Notably, we are able to identify an algebra representation to pair up with every group representation. 

Recall that a continuous map $\sigma \colon \Omega \to \Omega$ is called $\Gamma$\textit{-equivariant} if $\sigma(g x) = g \sigma(x)$ for every $g \in \Gamma$ and $x \in \Omega$, i.e., $\sigma$ commutes with the action of $\Gamma$. An obvious example is the identity map on $\Omega$. 

\begin{theorem}
	\label{thm:comm_rep} 
	Let $\pi \colon H \to \Omega$ be a Hilbert bundle over $\Omega$, and suppose that $u$ is a cocycle representation of $(\Gamma, \Omega)$ on $\pi$ with associated group homomorphism $v$. For any $\Gamma$-equivariant map $\sigma \colon \Omega \to \Omega$, define $\rho \colon C(\Omega) \to \L(\Gamma(\pi))$ by $$\left( \rho(f) \xi \right) (x) = f(\sigma(x)) \xi(x) \quad ( f \in C(\Omega), \xi \in \Gamma(\pi), x \in \Omega).$$ Then $(\rho, v)$ is an equivariant representation of $\Sigma$ on $\Gamma(\pi)$. 
\end{theorem}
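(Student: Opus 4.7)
The plan is to observe that two of the three defining relations in \cref{def:equiv}, namely (ii) and (iii), involve only the group homomorphism $v$ and so come for free from the converse direction of the preceding theorem. The task therefore reduces to (a) showing that $\rho$ is a well-defined unital $*$-homomorphism $C(\Omega) \to \L(\Gamma(\pi))$, and (b) verifying the covariance relation (i), namely $\rho(\alpha_g(f)) = v(g) \rho(f) v(g)^{-1}$ for every $g \in \Gamma$ and $f \in C(\Omega)$.

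For (a), I would note that for each $f \in C(\Omega)$ the composition $f \circ \sigma$ again lies in $C(\Omega)$, so $\rho(f)$ is nothing more than the $C(\Omega)$-module multiplication operator on $\Gamma(\pi)$ by $f \circ \sigma$. This is automatically adjointable (the adjoint is multiplication by $\overline{f \circ \sigma}$), and the map $f \mapsto f \circ \sigma$ is itself a unital $*$-homomorphism of $C(\Omega)$, so $\rho$ inherits this structure.

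The substantive step is (b). Here I would fix $g \in \Gamma$, $f \in C(\Omega)$, $\xi \in \Gamma(\pi)$ and $x \in \Omega$, and evaluate both sides at $x$. The left-hand side gives $f(g^{-1} \sigma(x)) \xi(x)$, which equals $f(\sigma(g^{-1}x))\xi(x)$ by the $\Gamma$-equivariance of $\sigma$. Using the hypothesis $(v(g)\xi)(x) = u(x, g)\xi(g^{-1}x)$, and noting that the inverse is given by $(v(g)^{-1}\xi)(y) = u(y, g^{-1})\xi(gy)$, the right-hand side evaluates to $u(x, g)\, f(\sigma(g^{-1}x))\, u(g^{-1}x, g^{-1})\xi(x)$. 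The cocycle identity applied with $h = g^{-1}$, together with the observation $u(x, e) = \mathrm{id}_{H_x}$ (which itself drops out of setting $g = h = e$ in the cocycle identity), collapses $u(x,g)u(g^{-1}x, g^{-1})$ to the identity, and the two sides match.

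I do not expect any genuine obstacle here; the only piece that is more than bookkeeping is keeping straight which point of $\Omega$ each occurrence of $u$ is evaluated at, so that the cocycle identity lands on the correct pair of arguments. Once this is organized, the $\Gamma$-equivariance of $\sigma$ immediately closes the covariance calculation, and conditions (ii) and (iii) require no further work.
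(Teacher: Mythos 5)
Your proof is correct and takes essentially the same approach as the paper: reduce everything to the covariance relation (i) (since conditions (ii) and (iii) concern only $v$ and are guaranteed by the preceding theorem), and verify (i) pointwise using the $\Gamma$-equivariance of $\sigma$. The only cosmetic difference is that the paper checks the equivalent identity $\rho(\alpha_g(f))\, v(g) = v(g)\, \rho(f)$ directly, thereby avoiding your (equally valid) detour through the formula for $v(g)^{-1}$, the cocycle identity with $h = g^{-1}$, and the observation $u(x,e) = \mathrm{id}_{H_x}$.
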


\begin{proof}
	It is clear that $\rho$ is a representation of $C(\Omega)$ on $\Gamma(\pi)$ by adjointable operators. Hence, it suffices to check that $(\rho, v)$ satisfies the first equivariance relation. To that end, pick $f \in C(\Omega), g \in \Gamma$ and $\xi \in \Gamma(\pi)$. Using $\Gamma$-equivariance of $\sigma$, we have for every $x \in \Omega$ that  
	$$		\left( \rho(\alpha_g(f)) v(g) \xi \right)(x) = f(g^{-1} \sigma(x)) u(x,g) \xi(g^{-1}x), $$
	and
	\begin{align*}
		\left( v(g) \rho(f) \xi \right)(x) &= u(x, g) \left( f(\sigma(g^{-1}x)) \xi(g^{-1}x) \right) \\
		&= f(g^{-1} \sigma(x)) u(x, g) \xi(g^{-1}x), 
	\end{align*}
	which is what we needed to see. 
\end{proof}

As mentioned in \cref{chap_prelim}, every equivariant representation of $(C(\Omega), \Gamma, \alpha)$ gives rise to a C*-correspondence over $C^*(C(\Omega), \Gamma, \alpha)$ and $C^*_{\mathrm{r}} (C(\Omega), \Gamma, \alpha)$. Hence, the theorem above may have applications to constructions of such. \\

\textbf{Examples.} We close this section by analyzing two classes of examples where finite cyclic groups act on finite-dimensional commutative C*-algebras. Though these systems might be considered as toy examples, they do shed some light on rigidity properties of the Fourier-Stieltjes algebra. Importantly, we identify two classes of examples of systems indexed by $n \in \N$ that are not cocycle group conjugate, see \cite[Definition 3.6]{BC2021}, but have isomorphic Fourier-Stieltjes algebras. It is also worth noting that in these examples, the equivariant representations described in \cref{thm:comm_rep} suffice to generate the Fourier-Stieltjes algebras. 

The cyclic group of order $n \in \N$ is denoted by $\Z_n = \{0, 1, \ldots, n-1 \}$. The standard basis in $\C^n$ is denoted by $e_k$ for $k \in \Z_n$ and the standard matrix units in $M_n(\C)$ are denoted by $E_{kl}$ for $k, l \in \Z_n$. 

\begin{example}
	For each $n \in \N$, let $\Omega_n = (\C^n, \Z_n, \mathrm{id})$ denote the C*-dynamical system where $\Z_n$ acts trivially on $\C^n$. Fix $k, l, p \in \Z_n$. Let $H = 0 \sqcup \cdots \sqcup \C^n \sqcup \cdots \sqcup 0$ be the disjoint union of $n-1$ trivial Hilbert spaces and $\C^n$ at the $k$th place. Formally, $H = \bigcup_{m \in \Z_n} (X_m \times \{m \})$, where $X_m = 0$ for $m \neq k$ and $X_k = \C^n$. Let $\pi \colon H \to \Z_n$ be the map $(\xi, m) \mapsto m$. The section space of this bundle can be identified with $\C^n$ with action by $\C^n$ and inner product given by 
	\begin{align*} 
		(x \cdot a)_j &= x_j a_k \\
		\langle x, y \rangle  &= \left( \sum_{j \in \Z_n} \overline{x_j} y_j \right) e_k
	\end{align*} 
	for $a, x, y \in \C^n$ and $j \in \Z_n$. Let $\sigma_l \colon \Z_n \to \Z_n$ be constantly equal to $l$ and let $U(x, 1)$ be the shift matrix $$U(x, 1) = \begin{pmatrix}
		0 & 0 & \cdots & 0 & 1 \\
		1 & 0 & \cdots  & 0 & 0 \\
		\vdots & \vdots & \ddots & \vdots & \vdots \\
		0 & 0 & \cdots & 1 & 0 
	\end{pmatrix}$$ for each $x \in \Z_n$. By defining $u(x, m) = U(x, 1)^m$ for $(x, m) \in \Z_n \times \Z_m$ we get a cocycle representation on $\pi$. Under the identification above, the representation $\rho_l$ of $\C^n$ associated to $\sigma_l$ is given by $$(\rho_l (a) \xi)_m = \xi_m a_l \quad (a, \xi \in \C^n, m \in \Z_n)$$ and the group homomorphism $v$ associated to $u$ is given by $$v(1) (\xi_0,\xi_1, \ldots, \xi_{n-1}) = (\xi_{n-1}, \xi_0, \ldots, \xi_{n-2}) \quad ((\xi_0, \ldots, \xi_{n-1}) \in \C^n)$$ and $v(m) = v(1)^m$ for $m \in \Z_n$. A simple computation shows that the multiplier associated to the equivariant representation $(\rho_l, v)$ and the choice of vectors $e_p, e_0 \in \C^n$ is given by \begin{equation} 
		\label{eq:multiplier} 
		T(m, a) = \delta_{p, m} E_{kl} a \quad (m \in \Z_n, a \in \C^n).
	\end{equation} Hence, we have implemented the standard matrix units in $L(\Z_n, \C^n) \cong \bigoplus_{k \in \Z_n} M_n(\C)$ as coefficient functions of equivariant representations of $\Omega_n$. We conclude that $$B(\Omega_n) \cong \bigoplus_{k \in \Z_n} M_n(\C).$$
\end{example}

\begin{example}
	The cyclic group $\Z_n$ also has a natural action $\alpha$ on $\C^n$ by cyclically permuting the indices, i.e., $$\alpha_1(\lambda_0, \lambda_1, \ldots, \lambda_{n-1}) = (\lambda_{n-1}, \lambda_0, \ldots, \lambda_{n-2}) \quad ((\lambda_0, \lambda_1, \ldots, \lambda_{n-1}) \in \C^n),$$ and $\alpha_k = (\alpha_1)^k$ for $k \in \Z_n$. Denote the corresponding C*-dynamical system by $\Sigma_n$. Now, let $H = \C^n \sqcup \cdots \sqcup \C^n$ denote the disjoint union of $n$ copies of $\C^n$. As above, $H = \bigcup_{m \in \Z_n} (\C^n \times \{m \})$ as a set. Let $\pi \colon H \to \Z_n$ be the map $(\xi, m) \mapsto m$. The section space of this bundle can be identified with the Hilbert $\C^n$-module $X$ whose underlying vector space is $\bigoplus_{k \in \Z_n} \C^n$, where $\C^n$ acts according to $$(\xi_0, \ldots, \xi_{n-1}) \cdot (\lambda_0, \ldots, \lambda_{n-1}) = (\lambda_0 \xi_0, \ldots, \lambda_{n-1} \xi_{n-1}),$$ and the inner product is taken in each summand. Let $\sigma \colon \Z_n \to \Z_n$ be the identity function, and let $U(x, 1)$ be given as in the previous example for each $x \in \Z_n$. By again setting $u(x, m) = U(x,1)^m$, we get a cocycle representation on $\pi$. The equivariant representation $(\rho, v)$ induced by $\sigma$ and $u$ is given by 
	\begin{align*}
		\rho(a)(\xi_0, \ldots, \xi_{n-1}) &= (a \xi_0, \ldots, a \xi_{n-1}) \\
		v(1)(\xi_0, \ldots, \xi_{n-1}) &= (\alpha_1(\xi_{n-1}), \ldots, \alpha_1(\xi_{n-2})) \\
		v(m) &= v(1)^m 
	\end{align*} 
	for $(\xi_0, \ldots, \xi_{n-1}) \in \bigoplus_{m \in \Z_n} \C^n$ and $m \in \Z_n$. Let $x \in X$ be the vector with $e_l$ in its $k$th component and zero elsewhere, and let $y \in X$ be the vector with $e_{p+l}$ in every component. A straightforward computation shows that the multiplier associated to the equivariant representation defined by $\sigma$ and $U$ and the vectors $x, y \in X$ is given by \cref{eq:multiplier}. Again, we conclude that the Fourier-Stieltjes algebra of $\Sigma_n$ is isomorphic to $$\bigoplus_{k \in \Z_n} M_n(\C),$$ which is somewhat more surprising than in the previous example.  
\end{example}

Combined, the two previous examples show that the identity map on $L(\Z_n, \C^n)$ provides a Banach algebra isomorphism from $B(\Sigma_n)$ onto $B(\Omega_n)$. If one hopes that the order structure induced by the positive definite cone makes the Fourier-Stieltjes algebra into a more rigid invariant, it is cogent to ask if there exists an isomorphism that maps $P(\Sigma_n)$ onto $P(\Omega_n)$. It turns out that this possibility can be excluded already in the case $n = 2$. 

In order to describe the positive definite cones in $B(\Omega_2)$ and $B(\Sigma_2)$, it suffices to understand the cyclic equivariant representations of $\C^n$ on Hilbert $\C^n$-modules. This is summarized in the following result, which follows directly from \cref{thm:hilb_mods}. However, since our terminology here is somewhat non-standard, we include a sketch of the proof. 

Given $n$ Hilbert spaces $H_0, \ldots, H_{n-1}$, one can construct a Hilbert $\C^n$-module $X$ whose underlying vector space is $\bigoplus_{k \in \Z_n} H_k$, and where the action of $\C^n$ is given by $$(\xi_0, \ldots, \xi_{n-1}) \cdot (\lambda_0, \ldots, \lambda_{n-1}) = (\lambda_0 \xi_0, \ldots, \lambda_{n-1} \xi_{n-1}).$$ The inner product in $X$ is given by $$\langle (\xi_0, \ldots, \xi_{n-1}), (\eta_0, \ldots, \eta_{n-1}) \rangle = ( \langle \xi_0, \eta_0 \rangle, \ldots, \langle \xi_{n-1}, \eta_{n-1} \rangle ) $$  for $(\xi_0, \ldots, \xi_{n-1}), (\eta_0, \ldots, \eta_{n-1})\in X$. We will refer to $X$ as a \textit{sectional} Hilbert $\C^n$-module. By mild abuse of terminology, we also refer to the Hilbert spaces $H_0, \ldots, H_{n-1}$ as \textit{fibres}. 

\begin{proposition}
	\label{prop:Cn_rep}
	Suppose that $X$ is a Hilbert $\C^n$-module. Then $X$ is unitarily equivalent to a sectional Hilbert $\C^n$-module. Moreover, two sectional Hilbert $\C^n$-modules $X = \bigoplus_{k \in \Z_n} H_k$ and $Y= \bigoplus_{k \in \Z_n} K_k$ are unitarily equivalent if and only if the Hilbert spaces $H_k$ and $K_k$ are unitarily equivalent for each $k \in \Z_n$. 
	
	Every representation $\rho \colon \C^n \to \L(X)$ of $\C^n$ on a sectional Hilbert $\C^n$-module $X = \bigoplus_{k \in \Z_n} H_k$ decomposes as a direct sum of representations of $\pi_k \colon \C^n \to \B(H_k)$ for $k \in \Z_n$. The representation $\rho$ is cyclic if and only if $\pi_k$ is cyclic for every $k \in \Z_n$, in which case $\dim(H_k) \leq n$ for each $k \in \Z_n$. 
\end{proposition}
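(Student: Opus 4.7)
The plan is to handle each of the three assertions in turn. For the first claim (every Hilbert $\C^n$-module is sectional), I would invoke \cref{thm:hilb_mods} with $\Omega = \Z_n$ equipped with the discrete topology. Since $\Z_n$ is finite and discrete, a Hilbert bundle over it is nothing but an arbitrary family of Hilbert spaces $(H_k)_{k \in \Z_n}$ indexed by the points, and its continuous section space is manifestly the sectional module $\bigoplus_k H_k$ with coordinatewise action and inner product. For the classification up to unitary equivalence, the "if" direction is immediate: assemble fibrewise unitaries into a direct sum. For the converse, I would use that the standard idempotent $e_k \in \C^n$ acts on a sectional module as the projection onto the $k$th fibre; a $\C^n$-linear unitary $U \colon X \to Y$ must intertwine these projections, so $U(H_k) = K_k$ and $U|_{H_k}$ is the required Hilbert space unitary.

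The crux of the third assertion is to identify $\L(X)$ for a sectional module $X = \bigoplus_k H_k$. Any $T \in \L(X)$ is $\C^n$-linear, so for $\xi \in H_k$ we have $T \xi = T(\xi \cdot e_k) = T(\xi) \cdot e_k \in H_k$, i.e., $T$ preserves each fibre. Hence $T$ is determined by its restrictions $T_k := T|_{H_k}$, and a quick check shows $\L(X) \cong \bigoplus_k \B(H_k)$. Given a $*$-representation $\rho$, setting $\pi_k(a) := \rho(a)|_{H_k}$ then yields the decomposition $\rho = \bigoplus_k \pi_k$ without further work.

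For the cyclicity equivalence, I would fix $\xi = (\xi_0, \ldots, \xi_{n-1}) \in X$ and consider the cyclic submodule $\mathrm{span}\{\rho(a) \xi \cdot b : a, b \in \C^n\}$. Choosing $b = e_k$ isolates the $k$th coordinate, showing that this submodule decomposes as $\bigoplus_k \mathrm{span}\{\pi_k(a) \xi_k : a \in \C^n\}$; density in $X$ is therefore equivalent to density of $\pi_k(\C^n) \xi_k$ in $H_k$ for every $k$. The dimension bound is then a one-liner: $\pi_k(\C^n) \xi_k$ is the image of a linear map out of $\C^n$, hence at most $n$-dimensional; being a finite-dimensional subspace of $H_k$, it is closed, so if it is dense it must equal $H_k$. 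I do not expect any substantive obstacle — once $\L(X)$ is identified with the fibrewise bounded operators, everything else reduces to straightforward linear algebra, and the heavy lifting has already been carried out in \cref{thm:hilb_mods}.
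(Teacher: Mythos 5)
Your proposal is correct, and it is worth noting how it relates to the paper's own argument. For the first assertion the paper does \emph{not} route through \cref{thm:hilb_mods}: it explicitly remarks that the proposition ``follows directly'' from that theorem but opts instead for an elementary self-contained construction, setting $H_k = X \cdot e_k$, computing $\langle x \cdot e_k, y \cdot e_l \rangle = \langle x, y \rangle e_k e_l$ to get orthogonality of the fibres and the fibre inner products, and exhibiting the explicit unitary $x \mapsto (x \cdot e_k)_{k \in \Z_n}$. Your route — specializing \cref{thm:hilb_mods} to $\Omega = \Z_n$ discrete, where a Hilbert bundle degenerates to a finite family of Hilbert spaces — is exactly the shortcut the paper acknowledges and avoids only for expository reasons; it is legitimate, with the one small point (worth a sentence) that the fibres of a bundle over a discrete space are clopen and carry the norm topology, by the facts recorded after \cref{def:banach_bundle}, so the bundle really is the disjoint union and all sections are continuous. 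For the classification and the block-diagonality of unitaries and of $\rho(a)$, both you and the paper use $\C^n$-linearity against the idempotents $e_k$ in the same way. Where you add genuine value is at the end: the paper dismisses the cyclicity equivalence and the bound $\dim(H_k) \leq n$ with ``the remaining assertions follow immediately,'' whereas you supply the actual mechanism — cutting the cyclic submodule $\mathrm{span}\{\rho(a)\xi \cdot b\}$ by $b = e_k$ to see that density in $X$ is fibrewise density of $\pi_k(\C^n)\xi_k$, and then observing that $\pi_k(\C^n)\xi_k$ is the image of a linear map on $\C^n$, hence a closed subspace of dimension at most $n$, forcing $H_k = \pi_k(\C^n)\xi_k$ when dense. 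In short: same skeleton, with your version trading the paper's hands-on first step for an appeal to the general classification theorem, and fleshing out the linear-algebra finish the paper leaves implicit.
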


\begin{proof}
	Suppose that $X$ is a Hilbert $\C^n$-module. For each $k \in \Z_n$, set $H_k = X \cdot e_k = \{ x \cdot e_k : x \in X \}$. Then if $x \cdot e_k \in H_k$ and $y \cdot e_l \in H_l$ for $k, l \in \Z_n$, we have 
	\begin{equation} 
		\label{eq:inner_prod}
		\langle x \cdot e_k, y \cdot e_l \rangle = e_k \langle x, y \rangle e_l = \langle x, y \rangle e_k e_l  = \begin{cases} 
			\langle x, y \rangle e_k \quad & \text{if } k = l,\\
			0 \quad & \text{otherwise}.
		\end{cases}
	\end{equation} 
	It follows that, as a vector space, $X$ decomposes as the direct sum of $H_k$ for $k \in \Z_n$, and that there is a well-defined $\C$-valued inner product $\langle \cdot, \cdot \rangle_k$ on $H_k$ such that $\langle x \cdot e_k, y \cdot e_k \rangle_k$ is the $k$th component of $\langle x, y \rangle$. Completeness of each $H_k$ follows from completeness of $X$ and the fact that each $e_k$ is idempotent. Form the sectional $\C^n$-module $Y := \bigoplus_{k \in \Z_n} H_k$, and define a map $U \colon X \to Y$ by $$Ux = (x \cdot e_k)_{k \in \Z_n} \quad (x \in X).$$ It is routine to verify that $U$ defines a unitary map from $X$ to $Y$. In fact, \eqref{eq:inner_prod} is the essential observation.
	
	For the converse statement, suppose that $V \colon \bigoplus_{k \in \Z_n} H_k \to \bigoplus_{k \in \Z_n} K_k$ is a unitary operator between sectional Hilbert $\C^n$-modules. The fact that $V$ is $\C^n$-linear implies that $V$ is block-diagonal. Unitarity then implies that $V$ restricts to a unitary operator $v_k \colon H_k \to K_k$ for each $k \in \Z_n$. Similarly, if $\rho \colon \C^n \to \L(X)$ is a representation of $\C^n$ on a sectional Hilbert $\C^n$-module, $\C^n$-linearity again implies that each operator $\rho(a)$ is block-diagonal with respect to the given decomposition. The remaining assertions follow immediately from this.  
\end{proof}

\begin{example}
	Consider a cyclic equivariant representation $(\rho, v)$ of $\Omega_2$ on a Hilbert $\C^2$-module $X = H_0 \oplus H_1$. By \cref{prop:Cn_rep}, $\rho$ is block-diagonal, consisting of two representations $\pi_i \colon \C^2 \to \B(H_i)$ and $\dim H_0, \dim H_1 \leq 2$. Since the action is trivial, the equivariance relations imply that $v(1)$ is of the form $$v(1) = \begin{pmatrix}
		u_0 & 0 \\ 0 & u_1 
	\end{pmatrix}$$ for some unitary involutive operators $u_i \colon H_i \to H_i$ commuting with $\pi_i$ for $i=0,1$. For each $i \in \Z_2$, the dimension of $H_i$ may be $0, 1$ or $2$. If $\dim H_i = 2$, the representation $\pi_i$ must be equivalent to the representation of $\C^2$ on itself by diagonal matrices, in which case the commutant consists of the diagonal matrices. If $\dim H_i = 1$, the same is of course true, and if $\dim H_i = 0$, we have $u_i = 0$. By possibly embedding $H_0$ and $H_1$ into $\C^2$, we may assume that $v(1)$ is of the form $$v(1) = \begin{pmatrix}
		\varepsilon_0 & 0 & 0 & 0 \\
		0 & \varepsilon_1 & 0 & 0 \\
		0 & 0 & \varepsilon_2 & 0 \\
		0 & 0 & 0 & \varepsilon_3
	\end{pmatrix}$$ for $\varepsilon_i \in \{-1, 0, 1\}.$ Let $T$ denote the $\Omega_2$-positive definite multiplier associated to $(\rho, v)$ and a vector $(\xi, \eta) \in H_0 \oplus H_1 $. The standard matrices for $T_0$ and $T_1$ are given by \begin{equation}
		\label{eq:triv_pos_def}  \begin{pmatrix}
			\abs{\xi_0}^2 &  \abs{\xi_1}^2 \\
			\abs{\eta_0}^2 &\abs{\eta_1}^2 
		\end{pmatrix} \quad \text{and} \quad \begin{pmatrix}
			\varepsilon_0 \abs{\xi_0}^2 & \varepsilon_1 \abs{\xi_1}^2 \\
			\varepsilon_2 \abs{\eta_0}^2 & \varepsilon_3 \abs{\eta_1}^2 
		\end{pmatrix}
	\end{equation} 
	respectively.
\end{example}

\begin{example}
	The analysis of $P(\Sigma_2)$ follows the same strategy as that of $P(\Omega_2)$, but the equivariance relations are now different. Similarly to the previous case, if $(\rho, v)$ is a cyclic equivariant representation of $\Sigma_2$ on a sectional Hilbert $\C^2$-module $X = H_0 \oplus H_1$, $\rho$ consists of two representations $\pi_i \colon \C^2 \to \B(H_i)$ for $i \in \Z_2$ with $\dim(H_i) \leq 2$. The diligent reader is invited to verify that the equivariance relations imply that $v(1)$ is of the form $$v(1) = \begin{pmatrix}
		0 & u \\ u^* & 0 \end{pmatrix}$$ 
	for some unitary operator $u \colon H_{0} \to H_1$ such that 
	\begin{align}
		\label{eq:comm1}
		\pi_0(\alpha_1(a)) u &= u \pi_0(a) \\
		\label{eq:comm2}
		\pi_1(\alpha_1(a))u^* &= u^* \pi_1(a)
	\end{align} for $a \in \C^2$. In particular, $\dim H_0 = \dim H_1$. As above, we need only consider the case $\dim(H_0) = \dim(H_1) = 2$ to obtain a full description. In this case, each $\pi_i$ is unitarily equivalent to the representation of $\C^2$ on itself by diagonal matrices. Combining this with \eqref{eq:comm1} and \eqref{eq:comm2} gives that $$u = \begin{pmatrix}
		0 & \varepsilon_0 \\ \varepsilon_1 & 0 
	\end{pmatrix}$$ for some $\varepsilon_0, \varepsilon_1 \in \{-1, 1\}$. Using this, one can check that the $\Sigma_2$-positive definite multiplier associated to $(\rho, v)$ and a vector $(\xi, \eta) \in H_0 \oplus H_1 = X$ has standard matrices 
	\begin{equation} 
		\label{eq:cyc_mat} 
		\begin{pmatrix}
			\abs{\xi_0}^2 & \abs{\xi_1}^2 \\
			\abs{\eta_0}^2 & \abs{\eta_1}^2
		\end{pmatrix} \quad \text{and} \quad \begin{pmatrix}
			\varepsilon_0\overline{\xi_0} \eta_1 & \varepsilon_1 \overline{\xi_1}\eta_0 \\
			\varepsilon_0 \overline{\eta_0} \xi_1 & \varepsilon_1 \overline{\eta_1} \xi_0
		\end{pmatrix}.
	\end{equation} 
	Notice the form of the second matrix compared to the previous example. 
\end{example}

With these concrete descriptions of $P(\Sigma_2)$ and $P(\Omega_2)$, it is not difficult to see that no isomorphism between $B(\Sigma_2)$ and $B(\Omega_2)$ can carry one to the other. Indeed, since every (algebraic) isomorphism of $M_2(\C)$ is inner, see \cite[Theorem 20']{lorenz}, every isomorphism of $M_2(\C) \oplus M_2(\C)$ must be of the form $\left( \mathrm{Ad}(V) \oplus \mathrm{Ad}(W) \right) \circ F^\varepsilon$ for some invertible matrices $V, W \in GL_2(\C)$ and $\varepsilon \in \{0, 1\}$, where $F$ denotes the flip on $M_2(\C) \oplus M_2(\C)$. In particular, every automorphism commutes with the componentwise trace, at least up to a flip of the factors. However, by using \cref{eq:triv_pos_def}, the image of $P(\Omega_2)$ under the componentwise trace is $[0, \infty) \times \R$. On the other hand, using \eqref{eq:cyc_mat} the image of $P(\Sigma_2)$ under the componentwise trace is $\R \times \C$. Hence, no such isomorphism can exist. 

\section{Concluding remarks and open questions}

We end by pointing to some interesting open questions relating to the present work, some of which are also mentioned in \cite{skalski} in their context. 

With two definitions of the Fourier algebra on the table, the most pressing question seems to be to compare the two. There are two concrete questions that are relevant in this regard: 

\begin{question}[{cf. \cite[Question 12.1]{skalski}}] Is the norm defined for coefficient functions of regular equivariant representations in \cite{skalski} equal/equivalent to the norm in the Fourier-Stieltjes algebra? 
\end{question}

\begin{question}[{cf. \cite[Question 12.3]{skalski}}]
	Is the set of coefficient functions of regular equivariant representation closed in the Fourier-Stieltjes algebra? 
\end{question} 

Note that if the second question can be answered in the affirmative, the two norms must be equivalent by the Open mapping theorem since Buss et al. show in \cite{skalski} that the set of coefficients of regular equivariant representations is complete with respect to their norm. Conversely, if the norms are equivalent, it follows that our definitions of the Fourier algebra coincide as algebras. 

It is also interesting to study approximation properties for dynamical systems in terms of positive definite multipliers with or without finite support. In their preprint \cite{skalski}, Buss et al. devote Section 10 to such questions. An interesting problem which they do not address is how such approximation properties relate to the existence of a bounded approximate unit in the Fourier algebra. A classical theorem due to Leptin says that a group is amenable if and only if its Fourier algebra admits a bounded approximate unit, in which case it can be assumed to consist of positive definite functions with compact support, see \cite[Theorem 2.7.2]{kaniuth}. It is an easy exercise to show that if the Fourier algebra admits a bounded approximate unit whose elements are finitely supported positive definite multipliers, the underlying system is amenable in the sense of Bédos and Conti, see \cite[Definition 4.4]{BC2016}. By density, it is also clear that any bounded approximate unit can be assumed to consist of multipliers with finite support. However, making these multipliers positive definite in a way that preserves all the desired properties seems non-trivial. Answering the following question would be a first step in the direction of a generalization of Leptin's theorem. 

\begin{question}
	Can a bounded approximate unit in the Fourier algebra be transformed to one consisting of positive definite multipliers (with finite support)? 
\end{question}

\subsection*{Acknowledgements} 
	The present article is based on the author's master's thesis \cite{ravnanger}, written under the supervision of Erik Bédos at the University of Oslo during the fall of 2023 and spring of 2024. The author is grateful to Erik Bédos for excellent guidance during the thesis work and for many helpful comments upon reading early versions of this manuscript. Thanks also to the referee for several helpful comments. We note that the preprint \cite{skalski} was published after the submission of said thesis.

\printbibliography{}
	
\end{document}